\newtheorem{thm}{Theorem}
\newtheorem{defn}{Definition}
\newtheorem{lemma}{Lemma}
\newtheorem{pro}{Proposition}
\newtheorem{rk}{Remark}
\newtheorem{cor}{Corollary}
\newtheorem{ex}{Example}
\numberwithin{equation}{section} \setcounter{tocdepth}{1}
\def\r{\rho}
\def\Q{\mathbb Q}
\def\C{\mathbb C}
\def\C{\mathbb{C}}
\def\N{\mathbb N}
\def\Z{\mathbb{Z}}
\begin{document}

\title[$p$-adic dynamical systems of a $(3,1)$-rational function]{$p$-adic dynamical systems of $(3,1)$-rational functions with unique fixed
point}

\author{A.R. Luna, U.A. Rozikov, I.A. Sattarov}
\address{A.\ R. \ Luna \\ Department of Mathematics, California State University, Fullerton, CA 92831}\email{lunaalex1998@gmail.com}

\address{U.\ A.\ Rozikov \\ Institute of mathematics,
81, Mirzo Ulug'bek str., 100125, Tashkent, Uzbekistan.} \email
{rozikovu@yandex.ru}

 \address{I.\ A.\ Sattarov \\ Institute of mathematics,
81, Mirzo Ulug'bek str., 100125, Tashkent, Uzbekistan.} \email
{sattarovi-a@yandex.ru}

\begin{abstract}
We describe the set of all $(3,1)$-rational functions given on the set of complex
$p$-adic field $\C_p$ and having a unique fixed point. We study
$p$-adic dynamical systems generated by such $(3,1)$-rational functions and
show that the fixed point is indifferent and therefore
the convergence of the trajectories is not the typical
case for the dynamical systems. We obtain Siegel disks of these dynamical systems.
 Moreover an upper bound for the set of limit points of each trajectory is given.
 For each $(3,1)$-rational function on $\C_p$ there is a point
 $\hat x=\hat x(f)\in \C_p$ which is
 zero in its denominator. We give explicit formulas of radii of spheres
(with the center at the fixed point) containing some points that
the trajectories (under actions of $f$) of the points after a finite step
come to $\hat x$. For a class of $(3,1)$-rational functions defined on the set of $p$-adic numbers $\Q_p$
we study ergodicity properties of the corresponding dynamical systems.
We show that if $p\geq 3$ then
the $p$-adic dynamical system reduced on each invariant sphere is not ergodic with respect to Haar measure.
For $p=2$, under some conditions we prove non ergodicity and
show that there exists a sphere on which the dynamical system is ergodic.
Finally, we give a characterization of periodic orbits and some uniformly local properties of the $(3.1)-$rational functions.
\end{abstract}

\keywords{Rational dynamical systems; fixed point; invariant set; Siegel disk;
complex $p$-adic field; ergodic.} \subjclass[2010]{46S10, 12J12, 11S99,
30D05, 54H20.} \maketitle

\section{Introduction}
 A function is called a $(n,m)$-rational function if and only if it
can be written in the form $f(x)={P_n(x)\over Q_m(x)}$, where
$P_n(x)$ and $Q_m(x)$ are polynomial functions with degree $n$ and
$m$ respectively, and $Q_m(x)$ is not the zero polynomial.

In this paper we study dynamical systems generated by a $(3,1)$-rational
function. This paper is a continuation of works \cite{ARS}, \cite{KMa}, \cite{M1}, \cite{UF},
\cite{RS}, \cite{RS2} and \cite{S}. Therefore for motivations of the consideration of such dynamical
systems and history of the problem we refer to these papers and the references therein.

The paper is organized as follows: in
Section 2 we give some preliminaries. Section 3 contains the
definition of the $(3,1)$-rational function and main results about behavior of trajectories of
the $p$-adic dynamical system.  Siegel
disks of these dynamical systems are studied.
An upper bound for the set of limit points of each trajectory is given.
  In Section 4 for a class of $(3,1)$-rational functions
we study ergodicity properties of the dynamical systems on the set of $p$-adic numbers $\Q_p$.
For each such function we describe all possible invariant spheres and study
ergodicity of each $p$-adic dynamical system with respect to Haar measure
reduced on each invariant sphere. For $p\geq 3$ it is proved that the dynamical
systems are not ergodic. But for $p=2$ under some conditions the dynamical system may be ergodic.
The last section contains results related to some uniformly local properties and periodic orbits of the rational function.

\section{Preliminaries}

\subsection{$p$-adic numbers}

Let $\Q$ be the field of rational numbers. The greatest common
divisor of the positive integers $n$ and $m$ is denoted by
$(n,m)$. Every rational number $x\neq 0$ can be represented in the
form $x=p^r\frac{n}{m}$, where $r,n\in\mathbb{Z}$, $m$ is a
positive integer, $(p,n)=1$, $(p,m)=1$ and $p$ is a fixed prime
number.

The $p$-adic norm of $x$ is given by
$$
|x|_p=\left\{
\begin{array}{ll}
p^{-r}, & \ \textrm{ for $x\neq 0$},\\[2mm]
0, &\ \textrm{ for $x=0$}.\\
\end{array}
\right.
$$
It has the following properties:

1) $|x|_p\geq 0$ and $|x|_p=0$ if and only if $x=0$,

2) $|xy|_p=|x|_p|y|_p$,

3) the strong triangle inequality
$$
|x+y|_p\leq\max\{|x|_p,|y|_p\},
$$

3.1) if $|x|_p\neq |y|_p$ then $|x+y|_p=\max\{|x|_p,|y|_p\}$,

3.2) if $|x|_p=|y|_p$ then $|x+y|_p\leq |x|_p$,

this is a non-Archimedean one.

The completion of $\Q$ with respect to the $p$-adic norm defines the
$p$-adic field which is denoted by $\Q_p$ (see \cite{Ko}).

The algebraic completion of $\Q_p$ is denoted by $\C_p$ and it is
called {\it complex $p$-adic numbers}.  For any $a\in\C_p$ and
$r>0$ denote
$$
U_r(a)=\{x\in\C_p : |x-a|_p<r\},\ \ V_r(a)=\{x\in\C_p :
|x-a|_p\leq r\}
$$
and
$$
S_r(a)=\{x\in\C_p : |x-a|_p= r\}.
$$

A function $f:U_r(a)\to\C_p$ is said to be {\it analytic} if it
can be represented by
$$
f(x)=\sum_{n=0}^{\infty}f_n(x-a)^n, \ \ \ f_n\in \C_p,
$$ which converges uniformly on the ball $U_r(a)$.

\subsection{Dynamical systems in $\C_p$}

Recall some known facts concerning dynamical
systems $(f,U)$ in $\C_p$, where $f: x\in U\to f(x)\in U$ is an
analytic function and $U=U_r(a)$ or $\C_p$ (see for example \cite{PJS}).

Now let $f:U\to U$ be an analytic function. Denote
$f^n(x)=\underbrace{f\circ\dots\circ f}_n(x)$.

If $f(x_0)=x_0$ then $x_0$
is called a {\it fixed point}. The set of all fixed points of $f$
is denoted by Fix$(f)$. A fixed point $x_0$ is called an {\it
attractor} if there exists a neighborhood $U(x_0)$ of $x_0$ such
that for all points $x\in U(x_0)$ it holds
$\lim\limits_{n\to\infty}f^n(x)=x_0$. If $x_0$ is an attractor
then its {\it basin of attraction} is
$$
A(x_0)=\{x\in \C_p :\ f^n(x)\to x_0, \ n\to\infty\}.
$$
A fixed point $x_0$ is called {\it repeller} if there  exists a
neighborhood $U(x_0)$ of $x_0$ such that $|f(x)-x_0|_p>|x-x_0|_p$
for $x\in U(x_0)$, $x\neq x_0$.

Let $x_0$ be a fixed point of a
function $f(x)$.
Put $\lambda=f'(x_0)$. The point $x_0$ is attractive if $0<|\lambda|_p < 1$, {\it indifferent} if $|\lambda|_p = 1$,
and repelling if $|\lambda|_p > 1$.

The ball $U_r(x_0)$ (contained in $V$) is said to
be a {\it Siegel disk} if each sphere $S_{\r}(x_0)$, $\r<r$ is an
invariant sphere of $f(x)$, i.e. if $x\in S_{\r}(x_0)$ then all
iterated points $f^n(x)\in S_{\r}(x_0)$ for all $n=1,2\dots$.  The
union of all Siegel desks with the center at $x_0$ is said to {\it
a maximum Siegel disk} and is denoted by $SI(x_0)$.

Two maps $f:U\rightarrow U$ and $g:V\rightarrow V$ are said
to be {\it topologically conjugate} if there exists a homeomorphism $h: U \rightarrow V$ such
that $h \circ f = g \circ h$. The homeomorphism $h$ is called a {\it topological conjugacy}.
Mappings which are topologically conjugate are completely equivalent in
terms of their dynamics. For example, if $f$ is topologically conjugate to $g$ via
$h$, and $x_0$ is a fixed point for $f$, then $h(x_0)$ is fixed for $g$. Indeed, $h(x_0) = h(f(x_0)) =
g(h(x_0))$.

\section{$(3,1)$-Rational $p$-adic dynamical systems}

In this paper we consider the dynamical system associated with the
$(3,1)$-rational function $f:\C_p\to\C_p$ defined by
\begin{equation}\label{f}
f(x)=\frac{x^3+ax^2+bx+c}{dx+e}, \ \ d\neq 0,\ \ \ a,b,c,d,e\in \C_p.
\end{equation}
where  $x\neq \hat x=-\frac{e}{d}$.

\subsection{Functions with unique fixed point} For $(3,1)$-rational functions with representation of (\ref{f}),  the equation
$f(x)=x$ of fixed points is equivalent to  the following cubic equation
\begin{equation}\label{ce}
x^3+(a-d)x^2+(b-e)x+c=0.
\end{equation}
Since $\C_p$ is algebraically closed the equation (\ref{ce}) may have
three solutions with one of the following relations:

(i) One solution having multiplicity three;

(ii) Two solutions, one of which has multiplicity two;

(iii) Three distinct solutions.

In this paper we investigate the behavior of trajectories of an
arbitrary $(3,1)$-rational dynamical system in complex $p$-adic
filed $\C_p$ when there is unique fixed point for $f$, i.e., we
consider case (i).

Denote $x_0$ to be the unique solution of the equation (\ref{ce}), hence $x_0$ has multiplicity three.
Then we have $x^3+(a-d)x^2+(b-e)x+c=(x-x_0)^3$ and

\begin{equation}\label{c3}
\left\{\begin{array}{lll}
3x_0=d-a \\[2mm]
3x_0^2=b-e \\[2mm]
x_0^3=-c.
\end{array}
\right.
\end{equation}

The following proposition characterizes all $(3,1)$-rational functions with a unique fixed point.

\begin{pro}\label{pro1}
Any $(3,1)$-rational function with unique fixed point is topologically conjugate to a function of the following form
\begin{equation}\label{fa}
f(x)=\frac{x^3+ax^2+bx}{ax+b}, \ \ ab\neq 0, \ \  a,b\in \C_p.
\end{equation}
where $x\neq \hat x=-\frac{b}{a}$.
\end{pro}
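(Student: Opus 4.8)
The plan is to obtain the normal form (\ref{fa}) from the general form (\ref{f}) by a single translation sending the unique fixed point $x_0$ to the origin. Since a translation $x\mapsto x-x_0$ is an isometry of $(\C_p,|\cdot|_p)$, it is a homeomorphism (analytic, with analytic inverse), and hence a topological conjugacy. Thus it suffices to exhibit the conjugated map and verify that it has exactly the shape claimed in (\ref{fa}).

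First I would rewrite $f$ in terms of its fixed point. Using the factorization behind (\ref{ce}) and (\ref{c3}), namely $x^3+(a-d)x^2+(b-e)x+c=(x-x_0)^3$, a direct computation gives
\[
f(x)-x=\frac{x^3+(a-d)x^2+(b-e)x+c}{dx+e}=\frac{(x-x_0)^3}{dx+e},
\]
so that $f(x)=x+\dfrac{(x-x_0)^3}{dx+e}$. This is the key simplification, as it isolates the effect of the translation on the denominator alone.

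Next I would conjugate by $h(x)=x-x_0$, with $h^{-1}(y)=y+x_0$, and set $g=h\circ f\circ h^{-1}$. Since $dx+e$ at $x=y+x_0$ equals $dy+(dx_0+e)$, one finds
\[
g(y)=f(y+x_0)-x_0=y+\frac{y^3}{dy+(dx_0+e)}=\frac{y^3+dy^2+(dx_0+e)y}{dy+(dx_0+e)}.
\]
Writing $a:=d$ and $b:=dx_0+e$ puts $g$ precisely in the form (\ref{fa}), and the pole $\hat x=-b/a$ of $g$ is the image under $h$ of the pole $-e/d$ of the original $f$.

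The main obstacle, and the only nonroutine point, is verifying the nondegeneracy $ab\neq0$. Here $a=d\neq0$ by hypothesis, while $b=dx_0+e=d\bigl(x_0-(-e/d)\bigr)=d(x_0-\hat x)$. Hence $b=0$ would force the fixed point $x_0$ to coincide with the pole $-e/d$; but then $dx+e$ would divide $(x-x_0)^3$, hence also the numerator of $f$, so that $f$ would reduce to a quadratic polynomial and fail to be a genuine $(3,1)$-rational function. Since $f$ is assumed to be a $(3,1)$-rational function (numerator of degree $3$, denominator of degree $1$, with no common factor), we necessarily have $x_0\neq\hat x$, whence $b\neq0$ and $ab\neq0$, completing the argument.
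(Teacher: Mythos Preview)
Your proof is correct and follows the same approach as the paper: conjugation by the translation sending the unique fixed point $x_0$ to $0$. Your execution is in fact a bit cleaner---by first writing $f(x)=x+\dfrac{(x-x_0)^3}{dx+e}$ you avoid the paper's explicit expansion and comparison of coefficients, and you supply a genuine argument for $b=dx_0+e\neq 0$ (namely, that $x_0=\hat x$ would force a common factor between numerator and denominator), whereas the paper simply asserts this without justification.
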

\begin{proof}
Consider a homeomorphism $h: \C_p \rightarrow \C_p$ defined by $x=h(t)=t+x_0$. So $h^{-1}(t)=t-x_0$.
Note that the function $f$ is topologically conjugate to the function $h^{-1}\circ f\circ h$.
We have
\begin{equation}\label{fb}
(h^{-1}\circ f\circ h)(t)=\frac{t^3+At^2+Bt+C}{Dt+E},
\end{equation}
where $A=3x_0+a$, $B=3x_0^2+(2a-d)x_0+b$, $C=x_0^3+(a-d)x_0^2+(b-e)x_0+c$, $D=d$ and $E=dx_0+e$.

It should be noted $A=D$, $B=E$, $C=0$ and $AB\neq 0$.
Indeed, from (\ref{c3}) we have
$$3x_0=d-a \ \ {\rm and} \ \  A=3x_0+a=d=D.$$
Also, $B-E=3x_0^2+2(a-d)x_0+b-e=0$, i.e. $B=E$ and $C=x_0^3+(a-d)x_0^2+(b-e)x_0+c=0$.
Note that $D=d\neq 0$ and $E=dx_0+e\neq 0$. Consequently, $DE=AB\neq 0$.
Thus
$$(h^{-1}\circ f\circ h)(t)=\frac{t^3+At^2+Bt}{At+B}, \ \ AB\neq 0.$$
\end{proof}

It is easy to see that the function (\ref{fa}) has a unique fixed point at $x_0=0$.
For (\ref{fa}) we have
$$f'(x_0)=f'(0)=1,$$
i.e.,  the point $x_0$ is an indifferent point for (\ref{fa}).

For any $x\in \C_p$, $x\ne \hat x$, by simple calculations we
get
\begin{equation}\label{f2}
    |f(x)|_p=|x|_p\cdot{|(x-x_1)(x-x_2)|_p\over {|ax+b|_p}},
\end{equation}
where $x_{1,2}={{-a\pm\sqrt{a^2-4b}}\over{2}}$.
Denote
\begin{equation}\label{P}
\mathcal P=\{x\in \C_p: \exists n\in \N\cup\{0\}, f^n(x)=\hat x\},
\end{equation}
$$\delta=|a|_p, \ \ \alpha=|x_1|_p \ \ {\rm and} \ \ \beta=|x_2|_p.$$
The following proposition gives relations between these real numbers.
\begin{pro}\label{1} We have
\begin{itemize}
\item[1.] If $\alpha\neq\beta$, then $\delta=\max\{\alpha,\beta\}$.
\item[2.] If $\alpha=\beta$, then $\delta\leq\alpha=\beta$.
\item[3.] $|b|_p=\alpha\beta$.
 \end{itemize}
\end{pro}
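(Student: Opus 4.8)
The plan is to use Vi\`ete's formulas, which relate the roots $x_1,x_2$ of the quadratic $x^2+ax+b=0$ to its coefficients, together with the ultrametric properties (3), (3.1), (3.2) of the $p$-adic norm. Since $x_{1,2}=\frac{-a\pm\sqrt{a^2-4b}}{2}$ are the roots of $x^2+ax+b$, we have the identities $x_1+x_2=-a$ and $x_1x_2=b$. The third claim is then immediate: $|b|_p=|x_1x_2|_p=|x_1|_p\cdot|x_2|_p=\alpha\beta$ by multiplicativity (property 2).

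For the first two claims I would analyze $\delta=|a|_p=|x_1+x_2|_p$ via the relation $-a=x_1+x_2$. If $\alpha\neq\beta$, i.e. $|x_1|_p\neq|x_2|_p$, then property (3.1) gives directly $\delta=|x_1+x_2|_p=\max\{|x_1|_p,|x_2|_p\}=\max\{\alpha,\beta\}$, proving statement 1. If instead $\alpha=\beta$, then $|x_1|_p=|x_2|_p$, and property (3.2) yields $\delta=|x_1+x_2|_p\leq|x_1|_p=\alpha=\beta$, which is exactly statement 2. Thus both remaining claims follow at once from the two sub-cases of the strong triangle inequality.

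The whole argument is essentially a direct application of the ultrametric inequality to the sum $x_1+x_2$, so I do not anticipate a genuine obstacle; the only point requiring care is to confirm that $x_1,x_2$ really are the roots of $x^2+ax+b$ so that Vi\`ete's formulas apply, which is clear from the stated formula $x_{1,2}=\frac{-a\pm\sqrt{a^2-4b}}{2}$. One should also note that the equality case of (3.2) can occur (so statement 2 genuinely needs the inequality rather than equality), which is why the two cases $\alpha\neq\beta$ and $\alpha=\beta$ behave differently and must be separated.
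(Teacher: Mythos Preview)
Your proof is correct and follows essentially the same approach as the paper: both use Vi\`ete's formulas $x_1+x_2=-a$ and $x_1x_2=b$, then apply multiplicativity of $|\cdot|_p$ for part 3 and the two sub-cases (3.1), (3.2) of the strong triangle inequality for parts 1 and 2. The only superficial difference is that the paper orders the items $1,2,3$ rather than proving $3$ first, but the content of the argument is identical.
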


\begin{proof}
\begin{itemize}
\item[1.] Since $\alpha\neq\beta$, by properties of $p$-adic norms we have
$$\delta=|a|_p=|-a|_p=|x_1+x_2|_p=\max\{|x_1|_p,|x_2|_p\}=\max\{\alpha,\beta\}.$$
\item[2.] Since $\alpha=\beta$, by properties of $p$-adic norms, we have $\delta\leq\alpha=\beta$.
\item[3.] By Vieta's formulas, since $-a=x_1+x_2$, we have that $b=x_1x_2$. It follows that $|b|_p=|x_1x_2|_p=|x_1|_p|x_2|_p=\alpha\beta$.
\end{itemize}
\end{proof}

\begin{rk}\label{r1}
By the symmetry of $x-x_1$ and $x-x_2$ in (\ref{f2}), we only consider the dynamical
system $(f,\mathbb{C}_p\setminus\mathcal P)$ for the cases $\alpha<\beta$ and $\alpha=\beta$.
\end{rk}

\subsection{Case: $\alpha<\beta$}

By Proposition \ref{1}, the only possible case is $\alpha<\beta=\delta$.
Now for $\alpha<\beta=\delta$, define the function $\phi_{\alpha,\beta}:[0,+\infty)\rightarrow[0,+\infty)$ by

$$\phi_{\alpha,\beta}(r)=\left\{\begin{array}{lllll}
r, \ \ {\rm if} \ \ r<\alpha\\[2mm]
\hat\alpha, \ \ {\rm if} \ \ r=\alpha\\[2mm]
r, \ \ {\rm if} \ \ \alpha<r<\beta\\[2mm]
\hat\beta, \ \ {\rm if} \ \ r=\beta\\[2mm]
\frac{r^2}{\beta}, \ \ \ \ {\rm if} \ \ r>\beta
\end{array}
\right.
$$
where $\hat\alpha$ and $\hat\beta$ are some positive numbers with $\hat\alpha>0$ and $\hat\beta\leq\beta$.\\
Now, using (\ref{f2}) we get the following lemma:

\begin{lemma}\label{lf1}
If $\alpha<\beta$ and $x\in{S}_r(0)$, then the following formula holds for the function (\ref{fa}):
$$|f^n(x)|_p=\phi_{\alpha,\beta}^n(r).$$
\end{lemma}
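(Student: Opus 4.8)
The plan is to establish the $n=1$ identity $|f(x)|_p=\phi_{\alpha,\beta}(r)$ for every $x\in S_r(0)$ and then promote it to all $n$ by induction, checking that each regime for $r$ is forward-invariant under $\phi_{\alpha,\beta}$. Throughout I work with the factored form $f(x)=\dfrac{x(x-x_1)(x-x_2)}{ax+b}$, so that (\ref{f2}) reads $|f(x)|_p=\dfrac{|x|_p\,|x-x_1|_p\,|x-x_2|_p}{|ax+b|_p}$, and I use that in the present case Proposition \ref{1} gives $|a|_p=\beta$ and $|b|_p=\alpha\beta$, together with $\alpha<\beta$. It is convenient to rewrite the denominator as $ax+b=a(x-\hat x)$ with $|\hat x|_p=|b|_p/|a|_p=\alpha$, so that $|ax+b|_p=\beta\,|x-\hat x|_p$.

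For the base case I split according to where $r=|x|_p$ sits relative to $\alpha$ and $\beta$, using the sharp form of the strong triangle inequality (property 3.1) whenever two norms differ. If $r\ne\alpha$ and $r\ne\beta$, all three factors and the denominator are governed by strict maxima: one finds $|x-x_1|_p=\max\{r,\alpha\}$, $|x-x_2|_p=\max\{r,\beta\}$, and $|ax+b|_p=\beta\max\{r,\alpha\}$. Substituting gives $|f(x)|_p=r$ when $r<\alpha$ (the factors $\alpha,\beta$ cancel the denominator $\alpha\beta$), $|f(x)|_p=r$ when $\alpha<r<\beta$, and $|f(x)|_p=r^2/\beta$ when $r>\beta$. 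These are exactly the three generic branches of $\phi_{\alpha,\beta}$.

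The two boundary radii are where only the weak inequality (property 3.2) is available, and this is the real obstacle, since $|f(x)|_p$ is then no longer determined by $r$ alone. At $r=\beta$ one still has $|x-x_1|_p=\beta$ and $|ax+b|_p=\beta^2$ (because $|ax|_p=\beta^2>\alpha\beta=|b|_p$), while $|x-x_2|_p\le\beta$; hence $|f(x)|_p=|x-x_2|_p$, a number $\hat\beta\le\beta$, as required by the definition of $\phi_{\alpha,\beta}$. At $r=\alpha$ the point $\hat x$ also lies on $S_\alpha(0)$, and the computation collapses to $|f(x)|_p=\alpha\,|x-x_1|_p/|x-\hat x|_p=:\hat\alpha$; this is a well-defined positive number provided $x\ne\hat x$ (automatic on $\C_p\setminus\mathcal P$) and $x\ne x_1$, which matches $\hat\alpha>0$.

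Finally I run the induction. For the three generic regimes $\phi_{\alpha,\beta}$ maps the regime into itself: $r<\alpha$ and $\alpha<r<\beta$ are fixed pointwise, while $r>\beta$ yields $r^2/\beta>\beta$. Hence if $|f^k(x)|_p=\phi_{\alpha,\beta}^k(r)$ lies in one of these regimes, then $f^k(x)$ sits on the sphere of that radius and the base case applies to $f\bigl(f^k(x)\bigr)$, giving $|f^{k+1}(x)|_p=\phi_{\alpha,\beta}\bigl(\phi_{\alpha,\beta}^k(r)\bigr)=\phi_{\alpha,\beta}^{k+1}(r)$; iterating proves the formula for all $n$. For a trajectory that passes through a boundary radius the same bookkeeping applies, reading $\hat\alpha$ and $\hat\beta$ off the iterate reached at that step, which is precisely the information encoded in $\phi_{\alpha,\beta}$.
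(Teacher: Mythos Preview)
Your argument is correct: the paper itself gives no proof of this lemma, merely remarking that it follows from the factored norm formula (\ref{f2}), and your case analysis is exactly what that remark unpacks. You have also correctly flagged the one genuine subtlety the paper glosses over here, namely that on the boundary radii $r=\alpha$ and $r=\beta$ the value $|f(x)|_p$ depends on $x$ and not just on $r$, so $\hat\alpha$ and $\hat\beta$ must be read off the actual iterate; the paper only makes this explicit afterward, in Theorem~\ref{t1}, via the $x$-dependent notation $\hat\alpha(x)$, $\hat\beta(x)$.
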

We now see that the real dynamical system $\phi_{\alpha,\beta}^n$ is directly related to the $p$-adic dynamical system $f^n(x), \ n\geq{1}, \ x\in\mathbb{C}_p\setminus\mathcal P$. The following lemma gives properties to this real dynamical system.

\begin{lemma}\label{l1} If $\alpha<\beta=\delta$, then the dynamical
system generated by $\phi_{\alpha,\beta}(r)$ has
the following properties:
\begin{itemize}
\item[1.] ${\rm Fix}(\phi_{\alpha,\beta})=\{r: 0\leq r<\alpha\}\cup\{\alpha:\, if \,
\hat\alpha=\alpha\}\cup\{r: \alpha<r<\beta\}\cup\{\beta: \, if \, \hat\beta=\beta\}$.
\item[2.] If $r>\beta$, then
$$\lim_{n\to\infty}\phi_{\alpha,\beta}^n(r)=+\infty.$$
\item[3.] Let $r=\alpha$.
\begin{itemize}
\item[3.1)] If $\hat\alpha<\alpha$ or $\alpha<\hat\alpha<\beta$, then
$\phi_{\alpha,\beta}^n(r)=\hat\alpha$ for any $n\geq 1$.
\item[3.2)] If $\hat\alpha=\beta$, then $\phi_{\alpha,\beta}(\alpha)=\beta$.
\item[3.3)] If $\hat\alpha>\beta$, then $$\lim_{n\to\infty}\phi_{\alpha,\beta}^n(\alpha)=+\infty.$$
\end{itemize}
\item[4.] Let $r=\beta$.
\begin{itemize}
\item[4.1)] If $\hat\beta<\alpha$ or $\alpha<\hat\beta<\beta$, then
$\phi_{\alpha,\beta}^n(r)=\hat\beta$ for any $n\geq 1$.
\item[4.2)] If $\hat\beta=\alpha$, then $\phi_{\alpha,\beta}(\beta)=\alpha$.
\end{itemize}
\end{itemize}
\end{lemma}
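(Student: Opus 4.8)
The plan is to work directly from the five-piece definition of $\phi_{\alpha,\beta}$, exploiting that on the two intervals $[0,\alpha)$ and $(\alpha,\beta)$ the map is the identity, so every point there is automatically fixed, while all genuine dynamics is confined to the two jump values $r=\alpha$, $r=\beta$ and to the unbounded region $r>\beta$.

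For part 1 I would solve $\phi_{\alpha,\beta}(r)=r$ piecewise. On $[0,\alpha)$ and on $(\alpha,\beta)$ the equation reads $r=r$, so every such $r$ is fixed. At $r=\alpha$ the value is $\hat\alpha$, giving a fixed point exactly when $\hat\alpha=\alpha$; at $r=\beta$ the value is $\hat\beta$, giving a fixed point exactly when $\hat\beta=\beta$. Finally, for $r>\beta$ the equation $r^2/\beta=r$ forces $r=\beta$, which is excluded from this range, so there are no fixed points here. Collecting these cases yields the stated description of ${\rm Fix}(\phi_{\alpha,\beta})$.

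Part 2 is the only step requiring a computation, and is where I expect the (modest) main work to lie. First I would check that $\{r>\beta\}$ is forward invariant: if $r>\beta$ then $\phi_{\alpha,\beta}(r)=r^2/\beta>\beta$, since $r^2>\beta^2$. Hence all iterates remain in this region and obey the recursion $r_{n+1}=r_n^2/\beta$. Setting $s_n=r_n/\beta$ converts this into $s_{n+1}=s_n^2$, which solves explicitly as $s_n=s_0^{2^n}$, that is $\phi_{\alpha,\beta}^n(r)=\beta\,(r/\beta)^{2^n}$. Since $r/\beta>1$, the right-hand side tends to $+\infty$, proving part 2.

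Parts 3 and 4 then follow by a single application of the map followed by the appropriate earlier part. Because $\phi_{\alpha,\beta}(\alpha)=\hat\alpha$, the orbit of $\alpha$ after the first step coincides with the orbit of $\hat\alpha$: if $\hat\alpha<\alpha$ or $\alpha<\hat\alpha<\beta$ then $\hat\alpha$ lies in an identity interval and is fixed by part 1, giving $\phi_{\alpha,\beta}^n(\alpha)=\hat\alpha$ for all $n\geq1$ (case 3.1); if $\hat\alpha=\beta$ the equality $\phi_{\alpha,\beta}(\alpha)=\beta$ is immediate (case 3.2); and if $\hat\alpha>\beta$ then $\hat\alpha$ lies in the region treated in part 2, whence $\phi_{\alpha,\beta}^n(\alpha)=\phi_{\alpha,\beta}^{n-1}(\hat\alpha)\to+\infty$ (case 3.3). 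Part 4 is identical with $\beta,\hat\beta$ in place of $\alpha,\hat\alpha$; here the standing hypothesis $\hat\beta\leq\beta$ rules out the divergent analogue of case 3.3, so that only the subcases 4.1 and 4.2 (together with the fixed-point case $\hat\beta=\beta$ already handled in part 1) can occur. The entire argument is elementary case-checking; the only real content is the forward-invariance and the explicit recursion underlying part 2.
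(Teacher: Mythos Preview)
Your proposal is correct and mirrors the paper's own proof almost exactly: the paper also handles part~1 by a piecewise analysis of $\phi_{\alpha,\beta}(r)=r$, proves part~2 via the explicit formula $\phi_{\alpha,\beta}^n(r)=\beta(r/\beta)^{2^n}$ for $r>\beta$, and then remarks that parts~3 and~4 follow from parts~1,~2 and the definition of $\phi_{\alpha,\beta}$. Your write-up simply fills in more of the routine details (forward invariance of $\{r>\beta\}$, the substitution $s_n=r_n/\beta$, and the explicit case-tracking for parts~3 and~4) than the paper does.
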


\begin{proof} 1. This is the result of a simple analysis
of the equation $\phi_{\alpha,\beta}(r)=r$.

2. By definition of the function $\phi_{\alpha,\beta}$, we have
$\phi_{\alpha,\beta}(r)={{r^2}\over{\beta}}>r$ for any $r>\beta$.

So $\phi_{\alpha,\beta}^n(r)=\beta\left({r\over\beta}\right)^{2^n}$.
Consequently,
$$\lim_{n\to\infty}\phi_{\alpha,\beta}^n(r)=+\infty.$$

Parts 3 and 4 easily follow from part 1,  part 2 and the definition of the function $\phi_{\alpha,\beta}$.
\end{proof}

Applying the lemma, we now obtain a theorem that describes the $p$-adic dynamical system generated by the function (\ref{fa}).\\
For $\alpha<\beta$, denote the following:\\
$$\hat\alpha(x)=|f(x)|_p, \ \ {\rm if} \ \ x\in{S}_{\alpha}(0)$$
$$\hat\beta(x)=|f(x)|_p, \ \ {\rm if} \ \ x\in{S}_{\beta}(0).$$
Using Lemmas \ref{lf1} and \ref{l1}, we obtain the following:

\begin{thm}\label{t1} If $\alpha<\beta=\delta$ and $x\in S_r(x_0)$, then
 the $p$-adic dynamical system generated by the function (\ref{fa}) has the following properties:
\begin{itemize}
\item[1.]
\begin{itemize}
\item[1.1)] $SI(x_0)=U_{\alpha}(0)$.
\item[1.2)] If $\alpha<r<\beta$, then $S_r(0)$ is an invariant for $f$.
\end{itemize}
\item[2.] If $r>\beta$, then $$\lim_{n\to\infty}|f^n(x)|_p=+\infty.$$
\item[3.] Let $r=\alpha$ and $x\in S_{\alpha}(0)\setminus\mathcal P$.
\begin{itemize}
\item[3.1)] If $\hat\alpha(x)=\alpha$, then $f(x)\in S_{\alpha}(0)$.
\item[3.2)] If $\hat\alpha(x)<\beta$ and $\hat\alpha(x)\ne\alpha$, then $f^n(x)\in S_{\hat\alpha(x)}(0)$ for any $n\geq 1$.
\item[3.3)] If $\hat\alpha(x)=\beta$, then $f(x)\in S_{\beta}(0)$.
\item[3.4)] If $\hat\alpha(x)>\beta$, then $$\lim_{n\to\infty}|f^n(x)|_p=+\infty.$$
\end{itemize}
\item[4.] Let $r=\beta$ and $x\in S_{\beta}(0)\setminus\mathcal P$.
\begin{itemize}
\item[4.1)] If $\hat\beta(x)=\beta$, then $f(x)\in S_{\beta}(0)$.
\item[4.2)] If $\hat\beta(x)<\beta$ and $\hat\beta(x)\ne\alpha$, then $f^n(x)\in S_{\hat\beta(x)}(0)$ for any $n\geq 1$.
\item[4.3)] If $\hat\beta(x)=\alpha$, then $f(x)\in S_{\alpha}(0)$.
\end{itemize}
\item[5.] $\mathcal P\subset S_{\alpha}(0)\cup S_{\beta}(0)$.
\end{itemize}
\end{thm}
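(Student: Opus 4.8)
The plan is to convert every assertion about the $p$-adic orbit $\{f^n(x)\}$ into an assertion about the real orbit $\{\phi_{\alpha,\beta}^n(r)\}$ by means of Lemma \ref{lf1}, and then to read off each conclusion from the already-established behaviour of $\phi_{\alpha,\beta}$ in Lemma \ref{l1}. Concretely, for $x\in S_r(0)$ we have $|f^n(x)|_p=\phi_{\alpha,\beta}^n(r)$, so the sphere on which $f^n(x)$ sits is completely determined by the real number $\phi_{\alpha,\beta}^n(r)$, and the whole proof becomes a dictionary between the two dynamical systems.

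For Parts 1 and 2 this translation is immediate. By Lemma \ref{l1}(1) every $r$ with $0\le r<\alpha$ is a fixed point of $\phi_{\alpha,\beta}$, so $|f(x)|_p=r$ and hence $f(x)\in S_r(0)$; thus each sphere $S_r(0)$ with $r<\alpha$ is invariant and $U_\alpha(0)$ is a Siegel disk, giving $SI(x_0)\supseteq U_\alpha(0)$. Maximality in Part 1.1 follows because any larger Siegel disk would force $S_\alpha(0)$ to be invariant, whereas $\phi_{\alpha,\beta}(\alpha)=\hat\alpha$ need not equal $\alpha$, so in general $S_\alpha(0)$ is not invariant. The same fixed-point analysis on the interval $(\alpha,\beta)$ yields Part 1.2, and Lemma \ref{l1}(2) gives $|f^n(x)|_p=\phi^n_{\alpha,\beta}(r)\to+\infty$ for $r>\beta$, which is Part 2.

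Parts 3 and 4 are where the argument needs care, and I expect this to be the main obstacle. For a fixed $x\in S_\alpha(0)\setminus\mathcal P$ the first iterate has norm $|f(x)|_p=\hat\alpha(x)$, i.e. we are setting the parameter $\hat\alpha$ of $\phi_{\alpha,\beta}$ equal to the $x$-dependent value $\hat\alpha(x)$. After one step $f(x)$ has landed on the sphere $S_{\hat\alpha(x)}(0)$, and the behaviour of all further iterates is then dictated by which regime of Parts 1--2 that sphere falls into: if $\hat\alpha(x)<\alpha$ or $\alpha<\hat\alpha(x)<\beta$ the sphere is invariant, so $f^n(x)\in S_{\hat\alpha(x)}(0)$ for all $n\ge 1$ (Part 3.2); if $\hat\alpha(x)=\alpha$ or $\hat\alpha(x)=\beta$ we merely record that $f(x)$ lies on $S_\alpha(0)$ or $S_\beta(0)$ (Parts 3.1, 3.3); and if $\hat\alpha(x)>\beta$ then applying Part 2 to the point $f(x)$ gives $|f^n(x)|_p\to+\infty$ (Part 3.4). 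The hypothesis $x\notin\mathcal P$ guarantees that no iterate equals the pole $\hat x$, so the orbit remains well defined throughout. Part 4 is the identical argument with $\beta$ and $\hat\beta(x)$ in place of $\alpha$ and $\hat\alpha(x)$, invoking Lemma \ref{l1}(4).

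Finally, for Part 5 I would first compute $|\hat x|_p=|b|_p/|a|_p=\alpha\beta/\beta=\alpha$, using $|a|_p=\delta=\beta$ and Proposition \ref{1}(3). Now suppose $x\in\mathcal P$, say $f^n(x)=\hat x$, and write $r=|x|_p$; it suffices to rule out $r\notin\{\alpha,\beta\}$. If $r<\alpha$ or $\alpha<r<\beta$, then $S_r(0)$ is invariant by Part 1, so $|f^m(x)|_p=r\ne\alpha$ for every $m$ and the orbit never meets $\hat x$; if $r>\beta$, then $\phi^m_{\alpha,\beta}(r)$ is strictly increasing and stays above $\beta>\alpha$, so again $|f^m(x)|_p\ne\alpha=|\hat x|_p$ for all $m$. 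Each case contradicts $f^n(x)=\hat x$, forcing $r\in\{\alpha,\beta\}$, which is precisely the inclusion $\mathcal P\subset S_\alpha(0)\cup S_\beta(0)$.
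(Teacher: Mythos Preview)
Your proposal is correct and follows the same approach as the paper: translate the $p$-adic dynamics to the real dynamical system $\phi_{\alpha,\beta}$ via Lemma~\ref{lf1}, then read off each item from Lemma~\ref{l1}. Your write-up is in fact more detailed than the paper's own proof, particularly in Part~5 where you make the case analysis explicit, and in Parts~3--4 where you spell out why one application of $f$ followed by Parts~1--2 suffices; the paper compresses these into a single sentence.
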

\begin{proof}
1. By Lemma \ref{lf1} and part 1 of Lemma \ref{l1} we see that the
 spheres $S_r(0)$ are invariant for $f$ if $r<\alpha$ or $\alpha<r<\beta$.
 Consequently, $U_{\alpha}(0)\subset SI(x_0)$.

 Moreover, by part 3 of Lemma \ref{l1} we know that $S_r(0)$ is not invariant of $f$ for $r=\alpha$. Thus $SI(x_0)=U_{\alpha}(0)$.

2. Follows from Lemma \ref{lf1} and part 2 of Lemma
\ref{l1}.

Parts 3 and 4 follow from the definition of the function $\phi_{\alpha,\beta}$  as well as parts 1 and 2 of this Theorem.

5. In this case we have $|\hat x|_p=\alpha$. From previous parts of this Theorem if $|x|_p\not\in\{\alpha,\beta\}$, then $|f(x)|_p\neq\alpha$. Consequently, $\mathcal P\subset S_{\alpha}(0)\cup S_{\beta}(0)$.
\end{proof}

\subsection{Case: $\alpha=\beta$}

By proposition \ref{1}, the only possible cases are $\delta<\alpha$ and $\delta=\alpha$.

Using the formula (\ref{f2}) we construct the following functions:

For $\delta<\alpha$, define the function $\zeta_{\alpha,\delta}:[0,+\infty)\rightarrow[0,+\infty)$ by

$$\zeta_{\alpha,\delta}(r)=\left\{\begin{array}{lllll}
r, \ \ {\rm if} \ \ r<\alpha\\[2mm]
\alpha', \ \ {\rm if} \ \ r=\alpha\\[2mm]
\frac{r^3}{\alpha^2},\ \ \ {\rm if} \ \ \alpha<r<\frac{\alpha^2}{\delta}\\[2mm]
\delta', \ \ {\rm if} \ \ r=\frac{\alpha^2}{\delta}\\[2mm]
\frac{r^2}{\delta}, \ \ \ \ {\rm if} \ \ r>\frac{\alpha^2}{\delta}
\end{array}
\right.
$$
where $\alpha'$ and $\delta'$ are positive numbers with $\alpha'\leq\alpha$ and $\delta'\geq\frac{\alpha^4}{\delta^3}$.

For $\delta=\alpha$, define the function $\eta_{\alpha}:[0,+\infty)\rightarrow[0,+\infty)$ by
$$\eta_{\alpha}(r)=\left\{\begin{array}{lllll}
r, \ \ {\rm if} \ \ r<\alpha\\[2mm]
\bar\alpha, \ \ {\rm if} \ \ r=\alpha\\[2mm]
\frac{r^2}{\alpha},\ \ \ {\rm if} \ \ \alpha<r
\end{array}
\right.
$$
where $\bar\alpha$ is a positive number with $\bar\alpha>0$.

\begin{lemma}\label{lf2}
If $\alpha=\beta$ and $x\in{S}_{r}(0)$, then the following formula holds for the function (\ref{fa})
$$|f^n(x)|_p=\left\{\begin{array}{lllll}
\zeta_{\alpha,\delta}^n(r), \ \ {\rm if} \ \ \delta<\alpha\\[2mm]
\eta_{\alpha}^n(r), \ \ {\rm if} \ \ \delta=\alpha.
\end{array}
\right.
$$
\end{lemma}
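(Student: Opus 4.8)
The plan is to prove the identity by induction on $n$, reducing everything to the single-step case $n=1$, which is a direct computation from formula (\ref{f2}). The key observation is that $\zeta_{\alpha,\delta}$ and $\eta_{\alpha}$ are built precisely so that, for $x\in S_r(0)$, one has $|f(x)|_p=\zeta_{\alpha,\delta}(r)$ when $\delta<\alpha$ and $|f(x)|_p=\eta_{\alpha}(r)$ when $\delta=\alpha$. Granting this single-step identity, the iterate formula follows from the composition $\zeta_{\alpha,\delta}^{n+1}=\zeta_{\alpha,\delta}\circ\zeta_{\alpha,\delta}^{n}$ exactly as in the (analogous) Lemma \ref{lf1}.

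For the base case with $\delta<\alpha$ I would read off the two ingredients of (\ref{f2}) using Proposition \ref{1}. Since $\alpha=\beta$ we have $|b|_p=\alpha^2$ and $\alpha^2/\delta>\alpha$. The numerator is controlled by the strong triangle inequality applied to $x-x_i$ (recall $|x_1|_p=|x_2|_p=\alpha$): it equals $\alpha^2$ for $r<\alpha$, is $\leq\alpha^2$ for $r=\alpha$, and equals $r^2$ for $r>\alpha$. The denominator is $|ax+b|_p=\max\{\delta r,\alpha^2\}$, hence equals $\alpha^2$ for $r<\alpha^2/\delta$, is $\leq\alpha^2$ for $r=\alpha^2/\delta$, and equals $\delta r$ for $r>\alpha^2/\delta$. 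Substituting into (\ref{f2}) reproduces the five branches of $\zeta_{\alpha,\delta}$ exactly: $r$ on $[0,\alpha)$, then $r^3/\alpha^2$ on $(\alpha,\alpha^2/\delta)$, then $r^2/\delta$ on $(\alpha^2/\delta,+\infty)$; and at the two critical radii the bounds $\alpha'\leq\alpha$ (at $r=\alpha$) and $\delta'\geq\alpha^4/\delta^3$ (at $r=\alpha^2/\delta$) fall out of the inequality $|ax+b|_p\leq\alpha^2$ that holds there.

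The base case with $\delta=\alpha$ is the degenerate specialization in which the two critical radii $\alpha$ and $\alpha^2/\delta$ coincide. The same computation, now with $|ax+b|_p=\max\{\alpha r,\alpha^2\}$, collapses the middle branches and produces the three branches of $\eta_{\alpha}$: $r$ for $r<\alpha$, the point-dependent value $\bar\alpha$ for $r=\alpha$, and $r^2/\alpha$ for $r>\alpha$.

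For the inductive step, assuming $|f^{n}(x)|_p=\zeta_{\alpha,\delta}^{n}(r)$, I would put $\rho=\zeta_{\alpha,\delta}^{n}(r)$, note that $f^{n}(x)\in S_{\rho}(0)$, and apply the $n=1$ result to the point $f^{n}(x)$ to get $|f^{n+1}(x)|_p=\zeta_{\alpha,\delta}(\rho)=\zeta_{\alpha,\delta}^{n+1}(r)$; the case $\delta=\alpha$ is identical with $\eta_{\alpha}$. I expect the only genuinely delicate point to be the bookkeeping at the critical radii $r=\alpha$ and $r=\alpha^2/\delta$, where the one-step image is honestly not a function of $r$ alone but depends on the actual point. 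There the symbols $\alpha'$, $\delta'$ (respectively $\bar\alpha$) must be read as the point-dependent quantity $|f(f^{n-1}(x))|_p$ taken along the orbit, in the same way $\hat\alpha,\hat\beta$ are used in Lemma \ref{lf1}; with this reading the composition identity is consistent and the induction closes.
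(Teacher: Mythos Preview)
Your proposal is correct and follows essentially the same approach as the paper: the paper states Lemma~\ref{lf2} without proof, merely introducing $\zeta_{\alpha,\delta}$ and $\eta_{\alpha}$ as the functions constructed ``using the formula (\ref{f2})'', and you have correctly supplied the omitted single-step computation from (\ref{f2}) together with the obvious induction. Your remark that $\alpha',\delta',\bar\alpha$ must be read as point-dependent along the orbit at the critical radii is exactly the convention the paper uses (compare $\hat\alpha,\hat\beta$ in Lemma~\ref{lf1}), so the bookkeeping is consistent.
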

Thus the real dynamical systems $\zeta_{\alpha,\delta}^n$ and $\eta_{\alpha}^n$ are directly related to the $p$-adic dynamical system $f^n(x), \ n\geq{1}, \ x\in\mathbb{C}_p\setminus\mathcal P$. The following lemmas gives properties to these real dynamical systems.

\begin{lemma}\label{l2}
If $\delta<\alpha$, then the dynamical system generated by $\zeta_{\alpha,\delta}$ has the following properties:
\begin{itemize}
\item[1.] ${\rm Fix}(\zeta_{\alpha,\delta})=\{r:0\leq{r}<\alpha\}\cup\{\alpha:\alpha'=\alpha\}$.
\item[2.] If $r=\alpha$ and $\alpha'<\alpha$, then $\zeta_{\alpha,\delta}^n(r)=\alpha'$ for all $n\geq{1}.$
\item[3.] If $r>\alpha$, then $\lim\limits_{n\rightarrow\infty}\zeta_{\alpha,\delta}^n(r)=+\infty$.
\end{itemize}
\end{lemma}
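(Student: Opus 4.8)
The plan is to analyze the piecewise map $\zeta_{\alpha,\delta}$ branch by branch, mirroring the treatment of $\phi_{\alpha,\beta}$ in Lemma \ref{l1}, using throughout the standing inequalities $\delta<\alpha$ (so that $\alpha<\frac{\alpha^2}{\delta}$) and $\delta'\geq\frac{\alpha^4}{\delta^3}>\frac{\alpha^2}{\delta}$. For item 1 I would solve $\zeta_{\alpha,\delta}(r)=r$ on each of the five pieces: on $\{r<\alpha\}$ the map is the identity so every point is fixed; at $r=\alpha$ the value is $\alpha'$, which is fixed precisely when $\alpha'=\alpha$; on $(\alpha,\frac{\alpha^2}{\delta})$ the equation $\frac{r^3}{\alpha^2}=r$ forces $r=\alpha$, which lies outside the piece; at $r=\frac{\alpha^2}{\delta}$ the value $\delta'$ strictly exceeds $r$; and on $\{r>\frac{\alpha^2}{\delta}\}$ the equation $\frac{r^2}{\delta}=r$ forces $r=\delta$, again outside the piece. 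Collecting these cases gives exactly the claimed fixed-point set. Item 2 is then immediate, since for $\alpha'<\alpha$ the single step $\zeta_{\alpha,\delta}(\alpha)=\alpha'$ lands in $\{r<\alpha\}$, where by item 1 the point is fixed, so $\zeta_{\alpha,\delta}^n(\alpha)=\alpha'$ for all $n\geq1$.

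The substance is item 3, which I would prove by showing that the top region $\{r>\frac{\alpha^2}{\delta}\}$ is forward invariant with orbits escaping to $+\infty$, and that every $r>\alpha$ enters it in finitely many steps. For $r>\frac{\alpha^2}{\delta}$ one has $\frac{r^2}{\delta}>\frac{\alpha^2}{\delta}$ (because $r>\alpha$), so the region is invariant; the substitution $u=\frac{r}{\delta}>1$ turns the map into $u\mapsto u^2$ and yields the closed form $\zeta_{\alpha,\delta}^n(r)=\delta\left(\frac{r}{\delta}\right)^{2^n}\to+\infty$. The threshold $r=\frac{\alpha^2}{\delta}$ maps to $\delta'>\frac{\alpha^2}{\delta}$, so one application drops it into the top region. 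Finally, on $\alpha<r<\frac{\alpha^2}{\delta}$ the map equals $\frac{r^3}{\alpha^2}>r$, and writing $s=\frac{r}{\alpha}>1$ converts it to $s\mapsto s^3$; while the orbit remains in this middle piece it is given by $\zeta_{\alpha,\delta}^k(r)=\alpha\left(\frac{r}{\alpha}\right)^{3^k}$, which increases without bound, so there is a least index $n^*$ with $\zeta_{\alpha,\delta}^{n^*}(r)\geq\frac{\alpha^2}{\delta}$, at which point the two previous cases take over and force $\zeta_{\alpha,\delta}^n(r)\to+\infty$.

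The hard part will be the bookkeeping in item 3: verifying that the orbit crosses the thresholds in the correct upward order and never stalls. This rests on two observations. First, on every piece above $\alpha$ the map lies strictly above the identity, so each orbit segment is strictly increasing and cannot return to a lower region. Second, the inequality $\delta'>\frac{\alpha^2}{\delta}$, which follows from $\delta'\geq\frac{\alpha^4}{\delta^3}$ together with $\alpha>\delta$, ensures that the exceptional threshold value $\frac{\alpha^2}{\delta}$ is not a trap but is pushed into the escaping branch. With monotonicity and this inequality in hand, the finiteness of $n^*$ is forced by the unbounded growth of $\alpha\left(\frac{r}{\alpha}\right)^{3^k}$ against the fixed bound $\frac{\alpha^2}{\delta}$, which completes the proof.
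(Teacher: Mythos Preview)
Your proposal is correct and follows essentially the same approach as the paper: analyze the piecewise map branch by branch, obtain the closed form $\delta(r/\delta)^{2^n}$ on the top piece, and argue that orbits starting in $(\alpha,\frac{\alpha^2}{\delta}]$ reach the top piece in finitely many steps. Your treatment is in fact more careful than the paper's, which simply asserts the existence of $n_0$ with $\zeta_{\alpha,\delta}^{n_0}(r)>\frac{\alpha^2}{\delta}$ without the explicit $\alpha(r/\alpha)^{3^k}$ computation or the verification that $\delta'>\frac{\alpha^2}{\delta}$.
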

\begin{proof}
\begin{itemize}
\item[1.] This is a simple observation of the function $\zeta_{\alpha,\delta}$.
\item[2.] If $r=\alpha$ and $\alpha'<\alpha$, then $\zeta_{\alpha,\beta}(\alpha)=\alpha'$ and $\alpha'$ is a fixed point for $f$. Consequently, $\zeta_{\alpha,\delta}^n(r)=\alpha'$ for all $n\geq{1}$.
\item[3.] If $r\in(\alpha,\frac{\alpha^2}{\delta}]$ then there exists an $n_0$ such that
$$\zeta_{\alpha,\delta}^{n_0}(r)>\dfrac{\alpha^2}{\delta}.$$
If $r>\frac{\alpha^2}{\delta},$ then by definition of the function $\zeta_{\alpha,\delta}$, we have
$$\zeta_{\alpha,\delta}(r)=\dfrac{r^2}{\delta} \ \ {\rm and} \ \ \zeta_{\alpha,\delta}^{n}(r)=\dfrac{r^{2^{n}}}{\delta^{2^{n}-1}}.$$
Since $\frac{r}{\delta}>1$, then
$$\lim\limits_{n\rightarrow\infty}\zeta_{\alpha,\delta}^n(r)=\lim\limits_{n\rightarrow\infty}(\dfrac{r}{\delta})^{2^n}\delta=+\infty.$$
It follows that
$$\lim\limits_{n\rightarrow\infty}\zeta_{\alpha,\delta}^n(r)=+\infty,$$
for all $r>\alpha$.
\end{itemize}
\end{proof}

\begin{lemma}\label{l3}
If $\delta=\alpha$, then the dynamical system generated by $\eta_{\alpha}$ has the following properties:
\begin{itemize}
\item[1.] ${\rm Fix}(\eta_{\alpha})=\{r:0\leq{r}<\alpha\}\cup\{\alpha:\bar\alpha=\alpha\}.$
\item[2.] If $r=\alpha$ and $\bar\alpha<\alpha$, then $\eta_{\alpha}^n(r)=\bar\alpha$ for all $n\geq{1}$.
\item[3.] If $r\geq{\alpha}$ and $\bar\alpha>\alpha$, then $\lim\limits_{n\rightarrow\infty}\eta_{\alpha}^n(r)=+\infty$.
\end{itemize}
\end{lemma}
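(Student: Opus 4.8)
The plan is to follow the same three-part template already used for $\phi_{\alpha,\beta}$ in Lemma \ref{l1} and for $\zeta_{\alpha,\delta}$ in Lemma \ref{l2}, since $\eta_{\alpha}$ has the simplest piecewise shape of the three functions. Each claim corresponds to analyzing one region of the definition, and the proofs of the earlier lemmas supply the template verbatim.

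For part 1, I would compute $\mathrm{Fix}(\eta_{\alpha})$ by solving $\eta_{\alpha}(r)=r$ separately on each piece. On $r<\alpha$ one has $\eta_{\alpha}(r)=r$ identically, so every such point is fixed; at $r=\alpha$ the value is $\bar\alpha$, which equals $\alpha$ exactly when $\bar\alpha=\alpha$; and for $r>\alpha$ the equation $r^2/\alpha=r$ forces $r=\alpha$, contradicting $r>\alpha$, so this region contributes no fixed points. Collecting these cases yields precisely the stated set $\{r:0\le r<\alpha\}\cup\{\alpha:\bar\alpha=\alpha\}$. Part 2 is then immediate: when $r=\alpha$ and $\bar\alpha<\alpha$, a single application gives $\eta_{\alpha}(\alpha)=\bar\alpha$, which lands in the region $\{r:0\le r<\alpha\}$ on which every point is fixed by part 1; hence $\eta_{\alpha}^{n}(\alpha)=\bar\alpha$ for all $n\ge 1$.

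For part 3, the engine is the explicit iteration on the top piece, exactly as in the corresponding step of Lemma \ref{l2}. First I would note that $r>\alpha$ implies $\eta_{\alpha}(r)=r^2/\alpha>\alpha$, so the region $r>\alpha$ is forward invariant and the branch $r\mapsto r^2/\alpha$ applies at every step; an easy induction then gives $\eta_{\alpha}^{n}(r)=r^{2^{n}}/\alpha^{2^{n}-1}=\alpha\,(r/\alpha)^{2^{n}}$, and since $r/\alpha>1$ this tends to $+\infty$. The only point needing a touch of care is the boundary value $r=\alpha$: here the first iterate is $\eta_{\alpha}(\alpha)=\bar\alpha>\alpha$, which is where the hypothesis $\bar\alpha>\alpha$ enters, and after this preliminary step the orbit has entered the region $r>\alpha$, so the divergence just established applies to $\eta_{\alpha}^{n-1}(\bar\alpha)$.

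I do not expect any genuine obstacle here; the argument is entirely parallel to Lemmas \ref{l1}--\ref{l2}. The only mild subtlety is remembering to reduce the boundary case $r=\alpha$ in part 3 to the interior case by a single preliminary iterate, and to record the forward-invariance of $\{r>\alpha\}$ so that the closed-form iteration is valid at each step.
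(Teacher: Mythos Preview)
Your proposal is correct and follows essentially the same approach as the paper: a piecewise analysis of $\eta_{\alpha}$ for part~1, the one-step reduction to a fixed point for part~2, and the closed-form iterate $\eta_{\alpha}^{n}(r)=r^{2^{n}}/\alpha^{2^{n}-1}$ on the region $r>\alpha$ for part~3, with the boundary case $r=\alpha$ handled by a single preliminary iterate using $\bar\alpha>\alpha$. The only cosmetic difference is that the paper phrases the divergence in part~3 via monotonicity of the iterates, whereas you conclude it directly from the closed form $\alpha(r/\alpha)^{2^{n}}$; both arguments are equivalent and equally valid.
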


\begin{proof}
\begin{itemize}
\item[1.] This is a simple observation of the function $\eta_{\alpha}$.
\item[2.] Since $r=\alpha$, $\eta_{\alpha}(\alpha)=\bar\alpha$. Since $\bar\alpha<\alpha$, by part 1, $\eta_{\alpha}^n(\alpha)=\bar\alpha$ for all $n\geq{1}$.
\item[3.] Since $\eta_{\alpha}$ is not bounded above, it suffices to show $\eta_{\alpha}^n(r)<\eta_{\alpha}^{n+1}(r)$ for all $r\geq{\alpha}$, where $\bar\alpha>\alpha$.
\\
Case I: Suppose $r=\alpha$. Then $\eta_{\alpha}(\alpha)=\bar\alpha,$ and $\bar\alpha>\alpha.$\\
Case II: Suppose $r>\alpha.$ Then, it can easily be shown that
$$\eta_{\alpha}^n(r)=\dfrac{r^{2^n}}{\alpha^{2^n-1}}\Rightarrow\eta_{\alpha}^{n+1}(r)=\dfrac{r^{2^{n+1}}}{\alpha^{2^{n+1}-1}}.$$
Now, since $\frac{r}{\alpha}>1$,
$$\dfrac{r^{2^n}}{\alpha^{2^n}}<\dfrac{r^{2^{n+1}}}{\alpha^{2^{n+1}}}
\Rightarrow\dfrac{r^{2^n}}{\alpha^{2^n-1}}<\dfrac{r^{2^{n+1}}}{\alpha^{2^{n+1}-1}}\Rightarrow\eta_{\alpha}^n(r)<\eta_{\alpha}^{n+1}(r).$$
It now follows that $$\lim\limits_{n\rightarrow\infty}\eta_{\alpha}^n(r)=+\infty.$$
\end{itemize}
\end{proof}
Applying the lemmas, we now describe the $p$-adic dynamical system generated by the function (\ref{fa}).

For $\delta<\alpha$, denote the following:
\begin{equation}\label{*}
\alpha^*(x)=|f(x)|_p, \ \ {\rm if} \ \ x\in{S}_{\alpha}(0).
\end{equation}
Using Lemmas \ref{lf2} and \ref{l2}, we obtain the following:

\begin{thm}\label{t2}
For $\delta<\alpha$ and $x\in{S}_{r}(0)\setminus\mathcal P$, we have
\begin{itemize}
\item[1.] $SI(x_0)=U_{\alpha}(0)$.
\item[2.] If $x\in{S}_{\alpha}(0)$, then:
\begin{itemize}
\item[a.] If $\alpha^*(x)=\alpha$, then $f(x)\in{S}_{\alpha}(0)$.
\item[b.] If $\alpha^*(x)<\alpha$, then $f^n(x)\in{S}_{\alpha^*(x)}(0)$ for all $n\geq{1}$.
\end{itemize}
\item[3.] If $r>\alpha$, then $\lim\limits_{n\rightarrow\infty}|f^n(x)|_p=+\infty$.
\end{itemize}
\end{thm}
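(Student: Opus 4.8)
The plan is to mirror the proof of Theorem~\ref{t1}, transporting the three statements about the real one-dimensional system $\zeta_{\alpha,\delta}$ recorded in Lemma~\ref{l2} back to the $p$-adic system through the norm identity $|f^n(x)|_p=\zeta_{\alpha,\delta}^n(r)$ of Lemma~\ref{lf2}, valid for $x\in S_r(0)$. This identity is the backbone of every part, so each conclusion reduces to reading off the corresponding property of $\zeta_{\alpha,\delta}$ and re-interpreting ``radius'' as ``$p$-adic norm''.

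First I would dispatch parts~3 and~2, which are essentially direct translations. For part~3, take $x\in S_r(0)$ with $r>\alpha$; Lemma~\ref{lf2} gives $|f^n(x)|_p=\zeta_{\alpha,\delta}^n(r)$, and part~3 of Lemma~\ref{l2} yields $\zeta_{\alpha,\delta}^n(r)\to+\infty$. For part~2, note that $\alpha^*(x)=|f(x)|_p=\zeta_{\alpha,\delta}(\alpha)$ for $x\in S_\alpha(0)$; case~(a) is immediate, since $\alpha^*(x)=\alpha$ means precisely $f(x)\in S_\alpha(0)$. In case~(b), once $\alpha^*(x)<\alpha$ the image $f(x)$ lands on a sphere of radius strictly below $\alpha$, which is a fixed point of $\zeta_{\alpha,\delta}$ by part~1 of Lemma~\ref{l2}; feeding this into Lemma~\ref{lf2} gives $|f^n(x)|_p=\zeta_{\alpha,\delta}^{n-1}(\alpha^*(x))=\alpha^*(x)$ for all $n\ge1$, that is $f^n(x)\in S_{\alpha^*(x)}(0)$. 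The easy half of part~1 runs on the same mechanism: for $r<\alpha$ every such $r$ is fixed by $\zeta_{\alpha,\delta}$, so $|f^n(x)|_p=r$ and each sphere $S_r(0)$ is invariant, whence $U_\alpha(0)\subset SI(x_0)$.

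The delicate half of part~1, and the step I expect to be the main obstacle, is the reverse inclusion: that $S_\alpha(0)$ is \emph{not} invariant, which blocks any larger Siegel disk and forces $SI(x_0)=U_\alpha(0)$. Here I cannot merely quote the real system, since $\alpha^*(x)=|f(x)|_p$ is genuinely non-constant on $S_\alpha(0)$; I must instead exhibit a concrete point that leaves the sphere. Writing $f$ in the factored form $f(x)=x(x-x_1)(x-x_2)/(ax+b)$, the root $x_1$ has $|x_1|_p=\alpha$, so $x_1\in S_\alpha(0)$, while $x_1\neq\hat x$ because $|\hat x|_p=|b|_p/|a|_p=\alpha^2/\delta>\alpha$; hence $ax_1+b\neq0$ and $f(x_1)=0$. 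Thus $\alpha^*(x_1)=0<\alpha$, the orbit of $x_1$ never meets $\hat x$ so $x_1\notin\mathcal P$, and $S_\alpha(0)$ is not invariant. More generally, for $x\in U_\alpha(x_1)\setminus\mathcal P$ formula~(\ref{f2}) gives $|ax+b|_p=|b|_p=\alpha^2$ (since $|ax|_p=\delta\alpha<\alpha^2$), so $|f(x)|_p=|x-x_1|_p\,|x-x_2|_p/\alpha<\alpha$, which simultaneously confirms non-invariance and illustrates case~(b). This establishes the maximality of $U_\alpha(0)$ and completes part~1.
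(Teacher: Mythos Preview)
Your proof is correct and follows the same overall route as the paper: pull back the properties of $\zeta_{\alpha,\delta}$ from Lemma~\ref{l2} through the norm identity of Lemma~\ref{lf2}. The paper's argument for part~1 is terser: it simply invokes parts~2 and~3 of Lemma~\ref{l2} to assert that $S_r(0)$ is not invariant for $r\geq\alpha$, without pausing to justify that the case $\alpha'<\alpha$ actually occurs on $S_\alpha(0)$. Your explicit witness $x_1$ (with the check $|\hat x|_p=\alpha^2/\delta>\alpha$, hence $x_1\neq\hat x$ and $f(x_1)=0$) supplies exactly the detail the paper leaves implicit, so this is a gain in rigor rather than a different approach.
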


\begin{proof} 1. By Lemma \ref{lf2} and parts 2 and 3 of Lemma \ref{l2}, we know that $S_r(0)$ is not an invariant of $f$ for $r\geq\alpha$. Consequently, $SI(x_0)\subset U_{\alpha}(0)$.

By Lemma \ref{lf2} and part 1 of  Lemma \ref{l2} if
$r<\alpha$ and $x\in S_r(0)$ then
$|f^n(x)|_p=\zeta^n_{\alpha,\delta}(r)=r$, i.e., $f^n(x)\in
S_r(0)$. Hence $U_{\alpha}(0)\subset SI(x_0)$ and thus
$SI(x_0)=U_{\alpha}(0).$

2. Let $x\in S_\alpha(0)$.

If $\alpha^*(x)=\alpha$, then by (\ref{*}) we have
$|f(x)|_p=\alpha$, i.e., $f(x)\in S_{\alpha}(0)$.

By part 1 of this Theorem if $\alpha^*(x)<\alpha$, then $f^n(x)\in
S_{\alpha^*(x)}(0)$ for any $n\geq 1$.

Part 3 easily follows from Lemma \ref{lf2} and part 3 of Lemma \ref{l2}.
\end{proof}

Realizing that (\ref{P}) has the form
$$\mathcal P=\bigcup_{k=0}^{\infty}\mathcal P_k, \  {\rm such \ that} \ \mathcal P_k=\{x\in\mathbb{C}_p:f^k(x)=\hat{x}\},$$
we obtain the following Theorem.

\begin{thm}
\begin{itemize}
\item[1.] $\mathcal P_k\neq\emptyset$ for any $k=0,1,2,...$
\item[2.] If $\delta<\alpha=\beta$, then $\mathcal P_k\subset S_{r_k}(0)$,   where
$r_k=\alpha\cdot\left({\alpha\over\delta}\right)^{1\over{3^k}}$,
$k=0,1,2,... \,.$
\end{itemize}
\end{thm}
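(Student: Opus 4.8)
The plan is to prove both claims by induction on $k$, using algebraic closedness of $\C_p$ for the first and the norm formula of Lemma \ref{lf2} for the second. Throughout I work with the normal form (\ref{fa}), so $\hat x=-b/a$, and I record from Proposition \ref{1} (case $\alpha=\beta$) that $|b|_p=\alpha^2$ and $|a|_p=\delta$, whence $|\hat x|_p=|b|_p/|a|_p=\alpha^2/\delta$.

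For the first claim I observe that $x\in\mathcal P_{k+1}$ iff $f(x)\in\mathcal P_k$, so $\mathcal P_{k+1}$ is an $f$-preimage of $\mathcal P_k$. The base case is immediate since $\mathcal P_0=\{\hat x\}$. For the inductive step, given $y\in\mathcal P_k$, the equation $f(x)=y$ is equivalent, after clearing the denominator $ax+b$, to the monic cubic
$$x^3+ax^2+(b-ya)x-yb=0,$$
which has at least one root $x_*\in\C_p$ because $\C_p$ is algebraically closed. Substituting $x=\hat x=-b/a$ into the left-hand side yields $-b^3/a^3\neq 0$, so no root equals $\hat x$; hence $x_*$ lies in the domain of $f$, satisfies $f(x_*)=y$, and therefore $f^{k+1}(x_*)=f^k(y)=\hat x$. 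This shows $\mathcal P_{k+1}\neq\emptyset$.

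For the second claim I again induct on $k$, tracking only the $p$-adic absolute value. Since $|\hat x|_p=\alpha^2/\delta=r_0$, we have $\mathcal P_0\subset S_{r_0}(0)$. Assume $\mathcal P_k\subset S_{r_k}(0)$ and take $x\in\mathcal P_{k+1}$; then $f(x)\in\mathcal P_k$, so $|f(x)|_p=r_k$, and writing $\rho=|x|_p$, Lemma \ref{lf2} gives $\zeta_{\alpha,\delta}(\rho)=r_k$. The crux is to show that among the five branches of $\zeta_{\alpha,\delta}$ only the middle one, $\zeta_{\alpha,\delta}(\rho)=\rho^3/\alpha^2$ on $\alpha<\rho<\alpha^2/\delta$, can produce the value $r_k$. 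Solving $\rho^3/\alpha^2=r_k$ then gives
$$\rho=(\alpha^2 r_k)^{1/3}=\alpha\Big(\frac{\alpha}{\delta}\Big)^{1/3^{k+1}}=r_{k+1},$$
and since $0<1/3^{k+1}<1$ one has $\alpha<r_{k+1}<\alpha^2/\delta$, consistent with the branch used; hence $x\in S_{r_{k+1}}(0)$, closing the induction.

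The main obstacle is ruling out the other four branches, and in particular the two exceptional spheres $S_\alpha(0)$ and $S_{\alpha^2/\delta}(0)$, on which $|f(x)|_p$ is not determined by $|x|_p$ alone (these are the values $\alpha'$ and $\delta'$ in the definition of $\zeta_{\alpha,\delta}$). Here I would use the chain of inequalities $\alpha<r_k\le \alpha^2/\delta<\alpha^4/\delta^3$, valid for every $k$ precisely because $\delta<\alpha$: the low branches with $\rho\le\alpha$ output a value $\le\alpha<r_k$ (using $\alpha'\le\alpha$), while the high branches with $\rho\ge\alpha^2/\delta$ output a value $\ge\alpha^4/\delta^3>r_k$ (using $\delta'\ge\alpha^4/\delta^3$ and $\rho^2/\delta>\alpha^4/\delta^3$). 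Thus the target $r_k$ lies strictly between the ranges of the low and high branches and can be attained only on $\alpha<\rho<\alpha^2/\delta$. Once this case analysis is in place, the identity $r_{k+1}=(\alpha^2 r_k)^{1/3}$ is the routine computation producing the stated formula $r_k=\alpha(\alpha/\delta)^{1/3^k}$.
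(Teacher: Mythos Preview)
Your proof is correct and follows essentially the same approach as the paper: both use induction together with algebraic closedness of $\C_p$ for part 1, and both reduce part 2 to the real dynamical system $\zeta_{\alpha,\delta}$ of Lemma~\ref{lf2}, computing the preimage of $\alpha^2/\delta$ on the middle branch $\rho\mapsto\rho^3/\alpha^2$. Your version is in fact slightly more careful: you verify that the root of the cubic in part 1 is not $\hat x$ (which the paper omits), and your explicit branch elimination in part 2, using $\alpha'\le\alpha$ and $\delta'\ge\alpha^4/\delta^3$ to sandwich $r_k$ strictly between the ranges of the low and high branches, makes rigorous what the paper only asserts.
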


\begin{proof}
\begin{itemize}
\item[1.] We proceed to show that $\mathcal P_k\neq\emptyset$ by induction.
Cleary, $\mathcal P_0\neq\emptyset$ since $\hat{x}\in\mathbb{C}_p$.
Suppose now that $\mathcal P_k\neq\emptyset.$ That is, there exists a $y\in\mathbb{C}_p$ such that
$$f^k(y)=\hat{x}.$$
We need to show that there exists an $x_0\in\mathbb{C}_p$ such that
$$f^{k+1}(x_0)=\hat{x}.$$
Noticing that $f^{k+1}(x)=f^k(f(x))$, we must find a solution to the equation
$$f(x)=y.$$ By (3.1), we have
$$\dfrac{x^3+ax^2+bx}{ax+b}=y\Rightarrow{x^3}+ax^2+(b-ay)x-by=0.$$
Since $\mathbb{C}_p$ is algebraically closed, there exists an $x_0\in\mathbb{C}_p$ such that
$$x_0^3+ax_0^2+(b-ay)x_0-by=0\Rightarrow{f(x_0)=y}.$$
Thus, we have
$$f^{k+1}(x_0)=f^k(f(x_0))=f^k(y)=\hat{x}.$$
\item[2.]
We know that $|\hat x|_p={{\alpha^2}\over\delta}$. By the condition $\delta<\alpha$, we get $\alpha<{{\alpha^2}\over\delta}$.
By part 3 of Theorem \ref{t2}, for $x\in
S_{{{\alpha^2}\over\delta}}(0)$ and $x\neq \hat x$ , we have
$$\lim_{n\to\infty}|f^n(x)|_p=+\infty,$$
i.e., $S_{{{\alpha^2}\over\delta}}(0)\cap\mathcal P=\{\hat x\}=\mathcal
P_0$.
Denoting $r_0={{\alpha^2}\over\delta}$ we write $\mathcal P_0\subset S_{r_0}(0)$.

Now we find spheres containing the solutions to the equations $$f^k(x)=\hat x, \ \ k=1,2,3,... .$$
For each  $k$ we want to find some $r_k$ such that the solution $x$ of $f^k(x)=\hat x$, belongs to $S_{r_k}(0)$, i.e.,  $x\in S_{r_k}(0)$.
By Lemma \ref{lf2} we should have
$$\zeta_{\alpha,\delta}^k(r_k)={{\alpha^2}\over\delta}.$$
Now if we show that the last equation has a unique solution $r_k$ for each $k$, then
we get
$$\mathcal P_k=\{x\in \C_p: f^k(x)=\hat x\}\subset
S_{r_k}(0).$$

For such $r_k$, $k=0,1,2,...$ by definition of $\zeta_{\alpha,\delta}(r)$,  we have $\alpha<r_k\leq{{\alpha^2}\over\delta}$ and
$\zeta_{\alpha,\delta}(r_k)={{r_k^3}\over{\alpha^2}}.$
Thus $\zeta_{\alpha,\delta}^k(r_k)={{\alpha^2}\over\delta}$ has the form
$$\zeta_{\alpha,\delta}^k(r_k)={{r_k^{3^k}}\over{\alpha^{3^k-1}}}={{\alpha^2}\over\delta}$$
consequently
$$r^{3^k}_k=\alpha^{3^k}\cdot{\alpha\over\delta}.$$
Taking the $3^k$-root
we obtain the unique solution:
$r_k=\alpha\cdot\left({\alpha\over\delta}\right)^{1\over{3^k}}$.
\end{itemize}
\end{proof}

Let $\delta=\alpha=\beta$. We denote
$$\alpha'(x)=|f(x)|_p, \
\ {\rm for} \ \ x\in S_{\alpha}(0).$$
By Lemma \ref{lf2} and Lemma
\ref{l3} we get

\begin{thm}\label{t2} If $\delta=\alpha=\beta$, then
 the $p$-adic dynamical system generated by the function (\ref{fa}) has the following properties:
\begin{itemize}
\item[1.]
\begin{itemize}
\item[a.] $SI(x_0)=U_{\alpha}(0)$.
\item[b.] $\mathcal P\subset{S}_{\alpha}(0)$.
\end{itemize}
\item[2.] For $x\in{S}_{\alpha}(0)\setminus\mathcal P$:
\begin{itemize}
\item[a.] If $\alpha'(x)=\alpha$, then $f(x)\in{S}_{\alpha}(0)$.
\item[b.] If $\alpha'(x)<\alpha$, then $f^n(x)\in{S}_{\alpha'(x)}(0)$ for all $n\geq{1}.$
\end{itemize}
\item[3.] If $r\geq\alpha$ and $\alpha'(x)>\alpha$, then $\lim\limits_{n\rightarrow\infty}|f^n(x)|_p=+\infty.$
\end{itemize}
\end{thm}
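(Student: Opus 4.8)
The plan is to reduce every assertion to the two preceding lemmas: Lemma~\ref{lf2}, which identifies $|f^n(x)|_p$ with the real orbit $\eta_\alpha^n(r)$ when $\delta=\alpha$ and $x\in S_r(0)$, and Lemma~\ref{l3}, which records the fixed points and limiting behaviour of $\eta_\alpha$. The one genuinely new ingredient I would establish at the outset is the location of the singular point. Using Proposition~\ref{1}(3) with $\alpha=\beta$ gives $|b|_p=\alpha^2$, and since $\delta=|a|_p=\alpha$ we obtain
$$|\hat x|_p=\left|\frac{b}{a}\right|_p=\frac{|b|_p}{|a|_p}=\frac{\alpha^2}{\alpha}=\alpha,$$
so that $\hat x\in S_\alpha(0)$. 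This single computation is what drives both parts of item~1.

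For item~1(a) I would prove the two inclusions separately. For $r<\alpha$ the value $r$ is a fixed point of $\eta_\alpha$ by Lemma~\ref{l3}(1), so Lemma~\ref{lf2} gives $|f^n(x)|_p=\eta_\alpha^n(r)=r$ for every $x\in S_r(0)$; hence each such sphere is invariant and $U_\alpha(0)\subset SI(x_0)$. For the reverse inclusion I would use that $\hat x\in S_\alpha(0)$ while $f$ is undefined at $\hat x$ (the denominator $ax+b$ vanishes there): the sphere $S_\alpha(0)$ therefore cannot be invariant, so no Siegel disk $U_r(x_0)$ with $r>\alpha$ can exist, and $SI(x_0)\subset U_\alpha(0)$. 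For item~1(b), take $x\in\mathcal P$ with $f^n(x)=\hat x$; applying Lemma~\ref{lf2} along the (defined) iterates up to step $n$ gives $\eta_\alpha^n(|x|_p)=|f^n(x)|_p=|\hat x|_p=\alpha$. A short case check rules out $|x|_p\neq\alpha$: if $|x|_p<\alpha$ then $\eta_\alpha$ fixes it and the value stays below $\alpha$, whereas if $|x|_p>\alpha$ then $\eta_\alpha(r)=r^2/\alpha>r>\alpha$ forces every iterate to exceed $\alpha$; in both cases the value $\alpha$ is unattainable, so $|x|_p=\alpha$ and $\mathcal P\subset S_\alpha(0)$.

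Items~2 and~3 are then direct translations through Lemma~\ref{lf2}. In item~2, for $x\in S_\alpha(0)$ the definition $\alpha'(x)=|f(x)|_p$ yields (a) immediately. For (b), when $\alpha'(x)<\alpha$ this value plays the role of $\bar\alpha<\alpha$, so Lemma~\ref{l3}(2) gives $|f^n(x)|_p=\eta_\alpha^n(\alpha)=\alpha'(x)$ for all $n\geq1$, i.e. $f^n(x)\in S_{\alpha'(x)}(0)$. For item~3, with $r\geq\alpha$ and $\alpha'(x)>\alpha$ (the case $\bar\alpha>\alpha$), Lemma~\ref{l3}(3) gives $\eta_\alpha^n(r)\to+\infty$, and Lemma~\ref{lf2} transfers this to $|f^n(x)|_p\to+\infty$.

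The genuinely load-bearing step is the norm computation $|\hat x|_p=\alpha$: it is what concentrates the whole dynamics onto the single sphere $S_\alpha(0)$, being responsible both for the failure of invariance of $S_\alpha(0)$ in~1(a) and for $\mathcal P\subset S_\alpha(0)$ in~1(b). Everything else is a mechanical passage between the $p$-adic orbit and the real orbit via Lemma~\ref{lf2}. I would therefore present that computation first and dispatch the remaining items as corollaries of Lemmas~\ref{lf2} and~\ref{l3}.
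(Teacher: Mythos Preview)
Your proposal is correct and follows essentially the same approach as the paper: compute $|\hat x|_p=\alpha$ and then route every item through Lemmas~\ref{lf2} and~\ref{l3}. The only noticeable difference is in the reverse inclusion for 1(a): the paper appeals to parts~2 and~3 of Lemma~\ref{l3} to show $S_r(0)$ is dynamically non-invariant for $r\geq\alpha$, whereas you argue that $S_\alpha(0)$ already fails because $f$ is undefined at $\hat x\in S_\alpha(0)$; both arguments are valid and lead to the same conclusion.
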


\begin{proof} 1. a. By Lemma \ref{lf2} and part 1 of Lemma \ref{l3} we see that
 the spheres $S_r(0)$, where $r<\alpha$ are invariant for $f$.
 Consequently, $U_{\alpha}(0)\subset SI(x_0)$. Moreover, by Lemma \ref{lf2} and parts 2 and 3 of Lemma \ref{l3}, we know that $S_r(0)$ is not
an invariant of $f$ for $r\geq\alpha$. Thus $SI(x_0)=U_{\alpha}(0)$.

If $\delta=\alpha$, then we have $|\hat x|_p=\left|-{b\over a}\right|_p=\alpha$,
i.e., $\hat x\in S_{\alpha}(0)$. If $x\in S_r(0)$, $r\neq \alpha$,
then $f(x)\in S_r(0)$ for $r<\alpha$ or $f(x)\in S_{{r^2}\over\alpha}(0)$ for $r>\alpha$
and we have ${{r^2}\over\alpha}>r$ for all $r>\alpha$.
So $f(x)\in S_{\alpha}(0)$ iff $x\in S_{\alpha}(0)$.
Consequently, $\mathcal P\subset S_{\alpha}(0)$.

2. Let $x\in S_\alpha(0)\setminus\mathcal P$.

If $\alpha'(x)=\alpha$, then we have
$|f(x)|_p=\alpha$, i.e., $f(x)\in S_{\alpha}(0)$.

By part 1 of this Theorem if $\alpha'(x)<\alpha$, then $f^n(x)\in
S_{\alpha'(x)}(0)$ for any $n\geq 1$.

3. Follows from Lemma \ref{lf2} and part 3 of Lemma \ref{l3}.
\end{proof}

\section{Ergodicity of $f(x)$ in $\mathbb{Q}_p$.}

In this section we consider the dynamical system of the function (\ref{fa}) in
$\mathbb{Q}_p$ and we assume that the square root $\sqrt{a^2-4b}$ exists in $\mathbb{Q}_p$.

Define the following sets
$$A_1=\{r: \, 0<r<\alpha\} \ \ {\rm if} \ \ \delta\leq\alpha=\beta;$$
$$A_2=\{r: \, r\in(0,\beta)\setminus\{\alpha\}\} \ \ {\rm if} \ \
\alpha<\beta=\delta;$$ and we denote $A=A_1\cup A_2$ for
$\alpha\leq\beta$.

The next Corollary immediately follows from section 3 and the definition of $A$.

\begin{cor} The sphere $S_r(0)$ is invariant for $f$ if and only if $r\in A$.
\end{cor}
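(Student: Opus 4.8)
The plan is to reduce the invariance question for the $p$-adic spheres $S_r(0)$ to the fixed-point analysis of the associated real one-dimensional maps, and then read off the answer from the results already proved in Section 3. By Lemmas \ref{lf1} and \ref{lf2}, for $x\in S_r(0)$ one has $|f^n(x)|_p=\psi^n(r)$, where $\psi$ is $\phi_{\alpha,\beta}$, $\zeta_{\alpha,\delta}$ or $\eta_\alpha$ according to the case. Hence $S_r(0)$ is invariant precisely when every point of $S_r(0)$ keeps its norm equal to $r$ under all iterates; away from the critical radii this is equivalent to $r\in{\rm Fix}(\psi)$, as computed in Lemmas \ref{l1}, \ref{l2} and \ref{l3}. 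Following Remark \ref{r1}, I would split into the two cases $\alpha<\beta$ and $\alpha=\beta$.

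First, consider $\alpha<\beta=\delta$, so that $A=A_2$. Parts 1.1 and 1.2 of Theorem \ref{t1} show that $S_r(0)$ is invariant for every $r$ with $0<r<\alpha$ or $\alpha<r<\beta$, i.e. for every $r\in(0,\beta)\setminus\{\alpha\}=A_2$. Conversely, part 2 of Theorem \ref{t1} rules out invariance for $r>\beta$ (the norms tend to $+\infty$), while parts 3 and 4 rule it out at the two critical radii $r=\alpha$ and $r=\beta$: there one has $|f(x)|_p=\hat\alpha(x)$ (resp. $\hat\beta(x)$), which for suitable $x$ takes a value different from $r$, so $f$ moves such an $x$ off the sphere. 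This gives invariance if and only if $r\in A_2$.

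Next, consider $\alpha=\beta$, where $A=A_1$, and split according to Proposition \ref{1} into $\delta<\alpha$ and $\delta=\alpha$. In the subcase $\delta<\alpha$, part 1 of Theorem \ref{t2} gives $SI(x_0)=U_\alpha(0)$, hence $S_r(0)$ is invariant for all $0<r<\alpha$; parts 2 and 3 show that $S_\alpha(0)$ is not invariant (a point $x$ with $\alpha^*(x)<\alpha$ leaves the sphere) and that for $r>\alpha$ the norms diverge. The subcase $\delta=\alpha$ is identical using the theorem for $\delta=\alpha=\beta$: $SI(x_0)=U_\alpha(0)$ again yields invariance exactly for $r<\alpha$, while $S_\alpha(0)$ fails to be invariant and all $r>\alpha$ escape to infinity. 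In both subcases the invariant radii are precisely $r\in(0,\alpha)=A_1$.

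Combining the two cases, $S_r(0)$ is invariant if and only if $r\in A_1\cup A_2=A$, with exactly one of the two defining conditions of $A$ active depending on whether $\alpha=\beta$ or $\alpha<\beta$. The only point requiring genuine care is the failure of invariance at the critical radii $r=\alpha$ (and $r=\beta$ when $\alpha<\beta$): unlike the generic radii, there $|f(x)|_p$ is not determined by $|x|_p$ alone, so one must exhibit an actual point whose image leaves the sphere rather than merely inspect the real map. This is exactly what the corresponding parts of Theorems \ref{t1} and \ref{t2} supply, so no additional work is needed.
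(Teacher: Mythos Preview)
Your approach matches the paper's: the corollary is stated there without proof, merely noting that it ``immediately follows from section 3 and the definition of $A$,'' and your case split via Theorems \ref{t1} and \ref{t2} is exactly the intended unpacking.

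There is, however, a small gap in your treatment of the critical radii. You correctly flag that at $r=\alpha$ (and $r=\beta$ when $\alpha<\beta$) the value $|f(x)|_p$ is not determined by $|x|_p$ alone, and that one must exhibit a point whose image leaves the sphere. But the parts of Theorems \ref{t1} and \ref{t2} you cite are \emph{conditional}: they describe what happens \emph{if} $\hat\alpha(x)$, $\hat\beta(x)$, $\alpha^*(x)$ or $\alpha'(x)$ takes a given value, without asserting that any such $x$ exists. So your sentence ``This is exactly what the corresponding parts of Theorems \ref{t1} and \ref{t2} supply'' overstates what those statements deliver. The missing witness is immediate once you recall that $x_1,x_2$ are roots of the numerator of $f$: since $|x_1|_p=\alpha$ and $|x_2|_p=\beta$, one has $x_1\in S_\alpha(0)$, $x_2\in S_\beta(0)$, and $f(x_1)=f(x_2)=0\notin S_\alpha(0)\cup S_\beta(0)$. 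Adding this one line closes the argument; alternatively, in the case $\alpha<\beta$ you can also argue for $S_\alpha(0)$ purely from the stated conclusions, since invariance of $S_\alpha(0)$ together with parts 1.1 and 1.2 of Theorem \ref{t1} would force $U_\beta(0)\subset SI(x_0)$, contradicting $SI(x_0)=U_\alpha(0)$.
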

Since $x_0=0$ is an
indifferent fixed point, in this section we are
interested in studying ergodicity properties of the dynamical system.

\begin{lemma}{\label{ab}}
For every closed ball $V_{\r}(c)\subset S_r(0), \ {\rm where}
 \ r\in A$, the
following equality holds $$f(V_{\r}(c))=V_{\r}(f(c)).$$
\end{lemma}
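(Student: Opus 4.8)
The plan is to show that the restriction of $f$ to the ball $V_\rho(c)$ is an isometry, and then to deduce the two inclusions $f(V_\rho(c)) \subseteq V_\rho(f(c))$ and $V_\rho(f(c)) \subseteq f(V_\rho(c))$ separately. Writing $f = N/D$ with $N(x) = x^3 + ax^2 + bx$ and $D(x) = ax + b$, a direct computation gives the divided-difference factorization
$$f(x) - f(y) = (x-y)\,\frac{G(x,y)}{D(x)D(y)}, \qquad G(x,y) = axy(x+y) + b(x^2+xy+y^2) + a^2xy + ab(x+y) + b^2,$$
where $G(x,x) = N'(x)D(x) - N(x)D'(x)$ is the numerator of $f'(x)D(x)^2$. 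Thus everything reduces to controlling $|G(x,y)|_p$, $|D(x)|_p$ and $|D(y)|_p$ on the ball.

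First I would record the elementary consequences of $V_\rho(c) \subseteq S_r(0)$: necessarily $\rho < r$, and $|x|_p = |y|_p = r$ for all $x,y$ in the ball. Since $\hat x = -b/a$ satisfies $|\hat x|_p \ne r$ for every $r \in A$ (in case $\alpha < \beta = \delta$ one has $|\hat x|_p = \alpha \ne r$, and in case $\delta \le \alpha = \beta$ one has $|\hat x|_p = \alpha^2/\delta > r$; both follow from Proposition \ref{1}), the point $\hat x$ lies at distance $\max(r, |\hat x|_p) > \rho$ from the ball, so $|x - \hat x|_p = |c - \hat x|_p$ is constant on $V_\rho(c)$ and hence $|D(x)|_p = |D(y)|_p = |D(c)|_p = \delta|c-\hat x|_p$.

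The heart of the argument is to show $|G(x,y)|_p = |D(c)|_p^2$ on the ball, equivalently $|f'(c)|_p = 1$ together with $G$ being constant in norm. I would expand $G$ as $\sum_{i,j} c_{ij}x^iy^j$ and, going through the cases of Proposition \ref{1} ($\delta \le \alpha = \beta$; and $\alpha < \beta = \delta$ with the two sub-cases $r < \alpha$ and $\alpha < r < \beta$), identify in each case a unique monomial of $G(c,c)$ of strictly maximal norm --- the constant term $b^2$ when $r$ is small, and the $xy$-term $(a^2+b)c^2$ when $\alpha < r < \beta$ --- so that $|G(c,c)|_p = \max_{i,j}|c_{ij}|_p\, r^{i+j} = |D(c)|_p^2$. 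The prime factors $|2|_p,|3|_p \le 1$ only decrease the competing terms, so $p = 2,3$ need no separate treatment. For the variation I would use $|x^iy^j - c^{i+j}|_p \le \rho\, r^{i+j-1}$ (from $|x-c|_p,|y-c|_p \le \rho$ and $|x|_p=|y|_p=|c|_p=r$), giving $|G(x,y) - G(c,c)|_p \le \tfrac{\rho}{r}|G(c,c)|_p < |G(c,c)|_p$, whence $|G(x,y)|_p = |G(c,c)|_p$ by the isosceles property of the norm. Combining, $|f(x)-f(y)|_p = |x-y|_p$ on $V_\rho(c)$, which yields both $f(V_\rho(c)) \subseteq V_\rho(f(c))$ and injectivity.

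It remains to prove surjectivity, which I expect to be the main obstacle, since a naive Hensel estimate would require $\rho < 1$. Instead, for a given $w \in V_\rho(f(c))$ I would solve $f(x) = w$ by means of the map $T(x) = x - (f(x)-w)/f'(c)$. Using $|f'(c)|_p = 1$ one checks $|T(x)-c|_p \le \rho$, so $T$ maps the complete ball $V_\rho(c)$ into itself; and writing $D(x) = D(c)+a(x-c)$ together with the two norm facts above, one finds $|T(x)-T(y)|_p \le \theta|x-y|_p$ with $\theta = \max\{\delta\rho/|D(c)|_p,\ \rho/r\} < 1$ (here $\delta\rho < \delta r \le |D(c)|_p$ and $\rho < r$). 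Banach's fixed point theorem then produces $x^* \in V_\rho(c)$ with $f(x^*) = w$, establishing $V_\rho(f(c)) \subseteq f(V_\rho(c))$ and hence the claimed equality.
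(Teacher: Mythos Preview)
Your argument is correct and, for the forward inclusion, takes essentially the same route as the paper: factor $f(x)-f(y)$ as $(x-y)\cdot G(x,y)/(D(x)D(y))$ and check by case analysis on $r\in A$ that the quotient has $p$-adic norm $1$. The paper's version is slightly less explicit (it computes only $|f(x)-f(c)|_p$ and leaves the ``unique dominant monomial'' reasoning implicit), but the underlying computation is the same.

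Where you go beyond the paper is the reverse inclusion $V_\rho(f(c))\subseteq f(V_\rho(c))$. The paper simply writes the equality $f(V_\rho(c))=V_\rho(f(c))$ immediately after establishing $|f(x)-f(c)|_p=|x-c|_p\le\rho$, without any further word on surjectivity. Your Banach fixed-point argument with $T(x)=x-(f(x)-w)/f'(c)$ closes this correctly: the self-map and contraction estimates $\theta=\max\{\delta\rho/|D(c)|_p,\ \rho/r\}<1$ follow exactly from the norm facts you already proved (and $\delta r\le |D(c)|_p$ holds with equality in the sub-case $\alpha<r<\beta$, which is still enough since $\rho<r$). An alternative is to invoke the non-archimedean inverse function theorem once $|f'|_p\equiv 1$ on the ball is known, but your self-contained treatment is arguably more complete than what the paper offers.
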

\begin{proof}
Since $V_{\rho}(c)\subset{S}_r(0)$, $|c|_p=r$. Also, for any $x\in{V}_{\rho}(c)$, we have that
$|x-c|_p\leq\rho.$\\
We will show that $|f(x)-f(c)|_p=|x-c|_p\leq\rho.$ We now have that
$$|f(x)-f(c)|_p=|x-c|_p\dfrac{|(x+c)axc+(x^2+xc+c^2)b+a^2xc+b^2|_p}{|a|_p^2|x+\frac{x_1x_2}{a}|_p|c+\frac{x_1x_2}{a}|_p}.$$
Here, by properties of $p$-adic norms, we have that
$|(x+c)axc+(x^2+xc+c^2)b+a^2xc+b^2|_p=\max\{r^3\delta,r^2\alpha,\beta,r^2\delta,\alpha^2\beta^2\}$ by the distinction of norms,
and since $x\in{V}_{\rho}(c)\subset{S}_r(0)$, $|x|_p=r$.

It follows that
$$|(x+c)axc|_p\leq{r}^3\delta, \ |(x^2+xc+c^2)b|_p\leq{r}^2\alpha\beta, \ |a^2xc|_p=\delta^2r^2 \ {\rm and} \ |b^2|=\alpha^2\beta^2.$$
Case I: For $r\in{A}_1$, we have $\alpha<\beta=\delta$ and $r\in(0,\alpha)$ or $r\in(\alpha,\beta)$.
\begin{itemize}
\item If $r\in(0,\alpha)$, then $\max\{r^3\delta,r^2\alpha,\beta,r^2\delta,\alpha^2\beta^2\}=\alpha^2\beta^2$ which gives
$$|f(x)-f(c)|_p=|x-c|_p\dfrac{\alpha^2\beta^2}{\alpha^2\beta^2}=|x-c|_p\leq{\rho}.$$
\item If $r\in(\alpha,\beta)$, then $\max\{r^3\delta,r^2\alpha,\beta,r^2\delta,\alpha^2\beta^2\}=r^2\delta^2$. Thus,
$$|f(x)-f(c)|_p=|x-c|_p\dfrac{r^2\delta^2}{r^2\delta^2}=|x-c|_p\leq{\rho}.$$
\end{itemize}
Case II: For $r\in{A}_2$, we have $\delta\leq\alpha=\beta$ and $r\in(0,\alpha)$. It follows that
$$|f(x)-f(c)|_p=|x-c|_p\dfrac{\alpha^2\beta^2}{\delta^2\frac{\alpha^2\beta^2}{\delta^2}}=|x-c|_p\leq{\rho}.$$
Therefore,
$$f(V_{\rho}(c))=V_{\rho}(f(c)).$$
\end{proof}

\begin{lemma}\label{ab2}
If $c\in{S}_r(0)$ for $r\in{A}$, then
$$|f(c)-c|_p=\left\{\begin{array}{lllll}
\frac{r^3}{\alpha\beta}, \ \ {\rm if} \ \ 0<r<\alpha\\[2mm]
\frac{r^2}{\delta}, \ \ {\rm if} \ \ \alpha<r<\beta.
\end{array}
\right.$$
\end{lemma}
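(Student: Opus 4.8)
The plan is to first simplify $f(c)-c$ into a single rational expression and then read off its $p$-adic norm using the strong triangle inequality. Using the reduced form \eqref{fa}, a direct subtraction gives
$$f(c)-c=\frac{c^3+ac^2+bc}{ac+b}-c=\frac{c^3+ac^2+bc-c(ac+b)}{ac+b}=\frac{c^3}{ac+b}.$$
Since $|c|_p=r$, this already yields $|f(c)-c|_p=\dfrac{r^3}{|ac+b|_p}$, and the entire lemma reduces to evaluating the single quantity $|ac+b|_p$ on the admissible range $r\in A$.

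To compute $|ac+b|_p$ I would compare its two summands: $|ac|_p=|a|_p|c|_p=\delta r$ and $|b|_p=\alpha\beta$ by part 3 of Proposition \ref{1}. The crucial observation is that for every $r\in A$ these two real numbers are distinct, so the sharp form of the ultrametric inequality (equality of the norm with the maximum when the two norms differ) applies and $|ac+b|_p=\max\{\delta r,\alpha\beta\}$. I would verify the required strict inequality separately in the two regimes allowed by Proposition \ref{1}.

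For $0<r<\alpha$ I would treat the two admissible configurations separately: when $\delta\le\alpha=\beta$ one has $\delta r\le\alpha r<\alpha^2=\alpha\beta$, and when $\alpha<\beta=\delta$ one has $\delta r=\beta r<\beta\alpha=\alpha\beta$; in either case $\delta r<\alpha\beta$, so $|ac+b|_p=\alpha\beta$ and hence $|f(c)-c|_p=r^3/(\alpha\beta)$. For $\alpha<r<\beta$ only the configuration $\alpha<\beta=\delta$ can occur, and there $\delta r=\beta r>\beta\alpha=\alpha\beta$, so $|ac+b|_p=\delta r$ and therefore $|f(c)-c|_p=r^3/(\delta r)=r^2/\delta$. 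Combining the two regimes gives exactly the piecewise formula in the statement.

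The only point demanding genuine care is ensuring one never lands on the borderline $\delta r=\alpha\beta$, where the strong triangle inequality would degrade to a mere upper bound and the equalities above could fail. I expect this to be the main (though minor) obstacle, and it is handled precisely by the definition of $A$: the borderline value $r=\alpha\beta/\delta$ equals $\alpha$ in the case $\alpha<\beta=\delta$ and is at least $\alpha$ in the case $\delta\le\alpha=\beta$, so excluding $r=\alpha$ and keeping $r$ strictly below $\alpha$ (respectively strictly between $\alpha$ and $\beta$) forces the strict separation $\delta r\neq\alpha\beta$ in each case.
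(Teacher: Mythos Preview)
Your proof is correct and follows essentially the same route as the paper: both reduce $f(c)-c$ to $c^3/(ac+b)$ (the paper writes the denominator as $|a|_p\,|c+\tfrac{x_1x_2}{a}|_p$, which is the same thing since $b=x_1x_2$) and then evaluate $|ac+b|_p$ case by case via the ultrametric inequality. Your last paragraph, checking explicitly that $\delta r\neq\alpha\beta$ on $A$, is in fact more careful than the paper's version, which silently uses this.
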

\begin{proof}
We first derive $|f(c)-c|_p$ to be
$$|f(c)-c|_p=\dfrac{|c|_p^3}{|a|_p|c+\frac{x_1x_2}{a}|_p}.$$
We now proceed by cases.

Case I: If $0<r<\alpha$, then $r\in{A}_1$ or $r\in{A}_2$. That is, $\alpha<\beta=\delta$ or $\delta\leq\alpha=\beta$ respectively.\\
In both scenarios, we have
$$|f(c)-c|_p=\dfrac{r^3}{\delta\frac{\alpha\beta}{\delta}}=\dfrac{r^3}{\alpha\beta}.$$

Case 2: If $\alpha<r<\beta$, then $r\in{A_2}$ which implies that $\alpha<\beta=\delta.$
It follows that
$$|f(c)-c|_p=\dfrac{r^3}{\delta{r}}=\dfrac{r^2}{\delta}.$$
\end{proof}

\begin{rk}\label{r2}
By Lemma \ref{ab2}, we see that $|f(c)-c|_p$ is dependent only on $r$.
 Thus, we may define the function $\rho:\mathbb{Q}_p\rightarrow[0,+\infty)$ to be
$$\rho(r)=|f(c)-c|_p.$$
\end{rk}

\begin{thm}\label{ca}
If $c\in{S}_r(0)$ for $r\in{A}$, then
\begin{itemize}
\item[1.] $|f^{n+1}(c)-f^n(c)|_p=\rho(r)$ for any $n\geq{1}$.
\item[2.] $f(V_{\rho(r)}(c))=V_{\rho(r)}(c).$
\item[3.] If $V_{\gamma}(c)\subset{S}_r(0)$ is an invariant for $f$, then $\gamma\geq{\rho(r)}.$
\end{itemize}
\end{thm}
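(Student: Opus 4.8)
The plan is to derive all three statements from the three facts already in hand for $r\in A$: the isometry property $f(V_\rho(c))=V_\rho(f(c))$ of Lemma \ref{ab}, the closed-form displacement $|f(c)-c|_p=\rho(r)$ of Lemma \ref{ab2} (which by Remark \ref{r2} depends only on the radius $r$), and the invariance of the sphere $S_r(0)$ guaranteed by the Corollary. The whole argument is essentially a matter of feeding the orbit points $f^n(c)$ back into these lemmas, together with the elementary non-Archimedean fact that $V_\gamma(u)=V_\gamma(v)$ as soon as $|u-v|_p\le\gamma$.

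For the first claim I would first note that, since $r\in A$, the Corollary makes $S_r(0)$ invariant, so by induction $f^n(c)\in S_r(0)$ for every $n\ge 0$; in particular $|f^n(c)|_p=r$. Applying Lemma \ref{ab2} to the point $y=f^n(c)\in S_r(0)$ — for which the displacement $|f(y)-y|_p$ equals $\rho(r)$ because it depends only on the radius — then yields $|f^{n+1}(c)-f^n(c)|_p=\rho(r)$ for all $n$, which contains the stated range $n\ge 1$.

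For the second claim the idea is to invoke Lemma \ref{ab} with the specific radius $\rho=\rho(r)$ and then identify the resulting ball. The one step requiring a short computation is the hypothesis of that lemma, namely the containment $V_{\rho(r)}(c)\subset S_r(0)$; by the isosceles-triangle principle this holds as soon as $\rho(r)<r$. I would verify $\rho(r)<r$ by cases: for $0<r<\alpha$ one has $\rho(r)=r^3/(\alpha\beta)$, and since $\alpha\le\beta$ and $r<\alpha$ we get $r^2<\alpha^2\le\alpha\beta$, hence $\rho(r)<r$; for $\alpha<r<\beta$ one has $\rho(r)=r^2/\delta=r^2/\beta<r$ because $r<\beta$. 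With the containment secured, Lemma \ref{ab} gives $f(V_{\rho(r)}(c))=V_{\rho(r)}(f(c))$, and since $|f(c)-c|_p=\rho(r)$ by Lemma \ref{ab2} the center $f(c)$ lies in $V_{\rho(r)}(c)$, so the two balls coincide and $f(V_{\rho(r)}(c))=V_{\rho(r)}(c)$.

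The third claim is immediate once the displacement is known: if $V_\gamma(c)\subset S_r(0)$ is invariant under $f$, then $f(c)\in f(V_\gamma(c))\subseteq V_\gamma(c)$, so $|f(c)-c|_p\le\gamma$; combining with $|f(c)-c|_p=\rho(r)$ gives $\gamma\ge\rho(r)$. Thus $\rho(r)$ is exactly the minimal radius of an $f$-invariant ball centered at $c$ inside $S_r(0)$, complementing part~2. The only genuine obstacle is the containment $V_{\rho(r)}(c)\subset S_r(0)$ needed in part~2, and it reduces to the elementary inequality $\rho(r)<r$ handled by the two-line case analysis above; the remaining assertions are direct consequences of the isometry and displacement lemmas.
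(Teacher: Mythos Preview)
Your argument is correct: each of the three parts follows cleanly from the invariance of $S_r(0)$, Lemma~\ref{ab}, and Lemma~\ref{ab2}, with the only nontrivial check being $\rho(r)<r$, which you dispatch correctly in both subcases. The paper itself does not spell out a proof but simply cites \cite{RS2}, so your self-contained derivation from the preceding lemmas is exactly the kind of argument one expects and almost certainly coincides with the one referenced there.
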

\begin{proof}
Theorem \ref{ca} and its proof are coincidentally identical to
Theorem 11 and its corresponding proof in \cite{RS2}.
\end{proof}

For each $r\in A$ consider a measurable space $(S_r(0),\mathcal
B)$, where $\mathcal B$ is the algebra generated by closed
subsets of $S_r(0)$. Every element of $\mathcal B$ is a union of
some balls $V_{\r}(c)$.

A measure $\bar\mu:\mathcal B\rightarrow \mathbb{R}$ is said to be
the \emph{Haar measure} if it is defined by $\bar\mu(V_{\r}(c))=\r$.

Note that $S_r(a)=V_r(a)\setminus V_{r\over p}(a)$. So, we denote
$r'=\bar\mu(S_r(0))=r(1-{1\over p})$.

We consider the normalized (probability) Haar measure:
$$\mu(V_{\r}(c))={{\bar\mu(V_{\r}(c))}\over{\bar\mu(S_r(0))}}={{\r}\over{r'}}.$$

By Lemma \ref{ab} we conclude that $f$ preserves the measure
$\mu$, i.e.
\begin{equation}{\label{ab3}}
\mu(f(V_{\r}(c)))=\mu(V_{\r}(c)).
\end{equation}

Consider the dynamical system $(X,T,\mu)$, where $T:X\rightarrow
X$ is a measure preserving transformation and $\mu$ is a
probability measure. We say that the dynamical system is {\it
ergodic} if for every
invariant set $V$ we have $\mu(V)=0$ or $\mu(V)=1$ (see
\cite{Wal}).

\begin{thm}\label{t6}
\begin{itemize}
\item[1.] If $p\geq 3$, then the dynamical system $(S_r(0), f, \mu)$ is
not ergodic for any $r\in A$ (i.e. for any invariant sphere
$S_r(0)$).
\item[2.] Let $p=2$.
\begin{itemize}
\item[2.1)] If $\alpha=\beta$, then the dynamical system $(S_r(0), f, \mu)$ is not ergodic
for any $r\in A$.
\item[2.2)] If $\alpha<\beta$, then the dynamical system $(S_r(0), f, \mu)$ is not ergodic
for any $r\in A\setminus\{{\beta\over 2}\}$.
\end{itemize}
 Here
$\mu$ is the normalized Haar measure.
\end{itemize}
\end{thm}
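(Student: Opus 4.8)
The plan is to exhibit, in every non-ergodic case, a proper subset of the sphere $S_r(0)$ that is invariant under $f$ and has normalized measure strictly between $0$ and $1$; the existence of such a set contradicts ergodicity directly. The building blocks come from Theorem \ref{ca}: for every $c\in S_r(0)$ the ball $V_{\rho(r)}(c)$ satisfies $f(V_{\rho(r)}(c))=V_{\rho(r)}(c)$, so it is invariant, and by part 3 of that theorem $\rho(r)$ is the smallest radius for which this happens. First I would record from Lemma \ref{ab2} (and Remark \ref{r2}) that $\rho(r)/r=r^2/(\alpha\beta)$ when $0<r<\alpha$ and $\rho(r)/r=r/\delta=r/\beta$ when $\alpha<r<\beta$; in both ranges $r<\max\{\alpha,\beta\}$ forces $\rho(r)<r$, so $V_{\rho(r)}(c)$ is genuinely contained in $S_r(0)$.

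The next step is a counting argument. Since we work in $\mathbb{Q}_p$ with $\sqrt{a^2-4b}\in\mathbb{Q}_p$, the quantities $\alpha,\beta,\delta$ and every admissible radius $r$ are integer powers of $p$; writing $\rho(r)=r\,p^{-m}$ we have $m\geq 1$ by the previous paragraph. The sphere $S_r(0)$ is then the disjoint union of exactly $N=r'/\rho(r)=(p-1)p^{m-1}$ balls of radius $\rho(r)$, and by part 2 of Theorem \ref{ca} each of these balls is mapped onto itself by $f$. Because distinct balls are disjoint and their union is $S_r(0)$, each one is in fact $f^{-1}$-invariant, and by the normalization $\mu(V_{\rho(r)}(c))=\rho(r)/r'=1/N$. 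Hence the system $(S_r(0),f,\mu)$ fails to be ergodic precisely when $N\geq 2$, i.e. when $(p-1)p^{m-1}\geq 2$, and it can be ergodic only if $N=1$, which demands $p=2$ and $m=1$.

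The final step is to decide, in each regime of Theorem \ref{t6}, when $N=1$ can occur, and here the arithmetic rigidity coming from all norms being powers of $p$ does the work. For $p\geq 3$ we always have $N\geq (p-1)\geq 2$, giving part 1. For $p=2$ I would use that $\alpha<\beta$ with both powers of $2$ forces $\beta\geq 2\alpha$: when $\alpha=\beta$ the only range is $0<r<\alpha$, where $\rho(r)/r=(r/\alpha)^2$ is an even power of $2$ and $r\leq\alpha/2$ gives $m\geq 2$, so $N\geq 2$ for all $r\in A$; when $\alpha<\beta$, the region $0<r<\alpha$ again yields $2r^2\leq\alpha^2/2<2\alpha^2\leq\alpha\beta$, hence $\rho(r)<r/2$ and $m\geq 2$, while in the region $\alpha<r<\beta$ one has $\rho(r)/r=r/\beta$, so $m=1$ (that is $N=1$) happens exactly at $r=\beta/2$ and $m\geq 2$ for every other admissible $r$. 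This isolates $r=\beta/2$ as the only possible exception and proves non-ergodicity on $A\setminus\{\beta/2\}$. The main obstacle is not any single estimate but the bookkeeping of this last paragraph: one must combine the explicit form of $\rho(r)$ with the fact that $r,\alpha,\beta$ lie in $p^{\mathbb{Z}}$ to pin down the unique radius at which the invariant ball fills the whole sphere, taking care that $r=\beta/2$ genuinely lies in $A$ (equivalently $\beta\geq 4\alpha$) exactly in the cases where it must be excluded.
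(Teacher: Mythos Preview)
Your argument is correct. The paper does not actually prove this theorem here but simply cites Theorem~12 of \cite{RS2}; the approach you give---partitioning $S_r(0)$ into the $f$-invariant balls $V_{\rho(r)}(c)$ supplied by part~2 of Theorem~\ref{ca}, counting them as $N=(p-1)p^{\,m-1}$ where $\rho(r)=r\,p^{-m}$, and then using the explicit form of $\rho(r)$ from Lemma~\ref{ab2} together with $r,\alpha,\beta\in p^{\mathbb Z}$ to verify $N\geq 2$ in every case except $p=2$, $r=\beta/2$---is exactly what the paper's preparatory results (Lemma~\ref{ab2}, Remark~\ref{r2}, Theorem~\ref{ca}) are set up to deliver, and is the standard argument in this setting.

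One small wording issue: you write that the system ``fails to be ergodic precisely when $N\geq 2$'', but $N=1$ does not by itself force ergodicity (that is handled separately in Theorem~\ref{erg2}). Since Theorem~\ref{t6} only asserts non-ergodicity, the implication $N\geq 2\Rightarrow$ not ergodic, which you establish cleanly, is all that is needed.
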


\begin{proof}
Theorem \ref{t6} and its proof are coincidentally identical to Theorem 12 and its corresponding proof in \cite{RS2}.
\end{proof}

\begin{thm}{\cite{M}}{\label{erg3}}
Let $f,g: 1+2\mathbb Z_2\rightarrow 1+2\mathbb Z_2$ be
polynomials whose coefficients are $2$-adic integers.

Set $f(x)=\sum_ia_ix^i$, $g(x)=\sum_ib_ix^i$, and
$$A_1=\sum_{i \, {\rm odd}}a_i, \ \ A_2=\sum_{i \, {\rm even}}a_i, \ \ B_1=\sum_{i \, {\rm odd}}b_i, \ \
B_2=\sum_{i \, {\rm even}}b_i.$$

The rational function $R={f\over g}$ is ergodic if and only if one
of the following situations occurs:

(1) $A_1=1({\rm mod}\,4)$, $A_2=2({\rm mod} \, 4)$, $B_1=0({\rm mod} \, 4)$ and
$B_2=1({\rm mod} \, 4)$.

(2) $A_1=3({\rm mod} 4)$, $A_2=2({\rm mod} \, 4)$, $B_1=0({\rm mod} \, 4)$ and
$B_2=3({\rm mod} \, 4)$.

(3) $A_1=1({\rm mod} \, 4)$, $A_2=0({\rm mod} \, 4)$, $B_1=2({\rm mod} \, 4)$ and
$B_2=1({\rm mod} \, 4)$.

(4) $A_1=3({\rm mod} \, 4)$, $A_2=0({\rm mod} \, 4)$, $B_1=2({\rm mod} \, 4)$ and
$B_2=3({\rm mod} \, 4)$.

(5) One of the previous cases with $f$ and $g$ interchanged.
\end{thm}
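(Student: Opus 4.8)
The plan is to transport the problem to the additive group $\mathbb{Z}_2$ and then apply the standard transitivity criterion for $1$-Lipschitz maps. First I would conjugate the system by the affine homeomorphism $\psi(x)=\frac{x-1}{2}$, which carries $1+2\mathbb{Z}_2$ bijectively onto $\mathbb{Z}_2$ and sends the normalized Haar measure to the normalized Haar measure. Since $f$ and $g$ have $2$-adic integer coefficients and $R=f/g$ maps $1+2\mathbb{Z}_2$ into itself, $g$ takes unit (odd) values on the domain, so both $f$ and $1/g$ are $1$-Lipschitz there; hence the conjugated map $\tilde R=\psi\circ R\circ\psi^{-1}$ is a well-defined $1$-Lipschitz (compatible) self-map of $\mathbb{Z}_2$. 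Ergodicity with respect to Haar measure is invariant under this conjugacy, so it suffices to analyze $\tilde R$ on $\mathbb{Z}_2$.

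Second, I would invoke the classical equivalence for such maps: a measure-preserving $1$-Lipschitz transformation of $\mathbb{Z}_2$ is ergodic with respect to Haar measure if and only if it induces a single $2^n$-cycle (a transitive permutation) on $\mathbb{Z}/2^n\mathbb{Z}$ for every $n\geq 1$. This replaces the measure-theoretic assertion by a purely combinatorial one about the finite quotients $\mathbb{Z}/2^n\mathbb{Z}$, which is the natural language in which the coefficient sums $A_1,A_2,B_1,B_2$ live.

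Third --- and this is the heart of the argument --- I would show that for a rational map of this special form the infinite tower of cyclicity conditions collapses to finitely many congruences modulo $4$. The mechanism is that the residue modulo $4$ of $x\in 1+2\mathbb{Z}_2$ is either $1$ or $3$, and a short computation gives $f(x)\equiv A_1+A_2\pmod 4$ when $x\equiv 1$ and $f(x)\equiv A_2-A_1\pmod 4$ when $x\equiv 3$, with the analogous formulas for $g$ in terms of $B_1,B_2$. Thus the four quantities $A_1,A_2,B_1,B_2$ exactly encode the action of $R$ on the two residue classes mod $4$ together with the first-order ``twisting'' that governs how a cycle modulo $2^n$ lifts to a cycle modulo $2^{n+1}$. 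Matching the requirement ``single cycle and correct lift'' against these formulas produces the four explicit systems (1)--(4), while case (5) records the symmetry of the whole setup under the interchange $f\leftrightarrow g$.

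The main obstacle is precisely this third step: proving that cyclicity modulo $4$, together with the indicated parity data, propagates to cyclicity modulo $2^n$ for all $n$, so that no infinite check is needed. For a general $1$-Lipschitz map this requires the van der Put / Mahler coefficient machinery of Anashin, and one must bound the $2$-adic valuations of the relevant coefficients of $\tilde R$ to certify that the higher-level lifting obstructions vanish automatically once the mod-$4$ conditions hold. Once this stabilization is established, the remaining work --- translating ``transitive permutation with correct lift'' into the stated congruences on $A_1,A_2,B_1,B_2$ --- is routine bookkeeping.
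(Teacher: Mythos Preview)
The paper does not prove this theorem at all: it is quoted verbatim from \cite{M} (Memi\'c, \emph{Characterization of ergodic rational functions on the set of $2$-adic units}) and is used as a black box, both in the example that follows and in the proof of Theorem~\ref{erg2}. So there is no ``paper's own proof'' to compare against; any argument you supply is necessarily external to this paper.

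That said, your outline is the right shape for how results of this type are established. The conjugation to $\mathbb{Z}_2$ and the reduction of ergodicity to transitivity on every $\mathbb{Z}/2^n\mathbb{Z}$ are standard and correct. The genuine content, as you identify, is the collapse of the infinite hierarchy of cyclicity conditions to a finite mod-$4$ check; in Memi\'c's paper this is done not via the full Anashin/van der Put machinery for arbitrary $1$-Lipschitz maps but by a direct induction tailored to the rational form $f/g$, exploiting that $g$ is a unit on $1+2\mathbb{Z}_2$ so that the map is locally analytic with controlled derivative. Your sketch would work, but invoking the general compatible-function theory is heavier than what the original source actually needs. If you want to fill this in rigorously, the cleaner route is to compute $R(1)$ and $R(3)$ modulo $4$ (which you already did), then show directly that $R'(x)\equiv 1\pmod 2$ on $1+2\mathbb{Z}_2$ forces the lift of a full cycle mod $2^n$ to be a full cycle mod $2^{n+1}$ --- this is a one-line Hensel-type step once the derivative condition is isolated, and it bypasses the van der Put expansion entirely.
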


We will now construct an example where $f(x)$ is ergodic for $p=2$ using this Theorem.

\begin{ex}
Recall that $\Z_2=\{x\in\Q_p: |x|_2\leq1\}$ and let $k\in\Z_2$. Letting
$$a=2k+\dfrac{1}{2} \ \ {\rm and} \ \ b=k$$
gives
$$f(x)=\dfrac{x^3+(2k+\frac{1}{2})x^2+kx}{(2k+\frac{1}{2})x+k}=\dfrac{2x^3+(4k+1)x^2+2kx}{(4k+1)x+2k}.$$
Notice that $\sqrt{a^2-4b}=2m-\frac{1}{2}\in\Q_p.$ In addition, $x_1=-2k$ and $x_2=-\frac{1}{2}$
which gives that
$$\alpha\leq\frac{1}{2} \ \ {\rm and} \ \ \beta=2.$$
Thus,
$$A=\{r:0<r<\alpha\leq\frac{1}{2} \ {\rm or} \ \alpha<r<2\}.$$
In relation to Theorem {\ref{erg3}}, we see that
$$A_1=2k+2, \ A_2=4k+1, \ B_1=4k+1 \ {\rm and} \ B_2=2k.$$
It follows that
$$A_1=0({\rm mod} \, 4) \ {\rm and} \ B_2=2({\rm mod} \, 4)$$
or
$$A_1=2({\rm mod} \, 4) \ {\rm and}  \ B_2=0({\rm mod} \, 4).$$
Since
$$B_1=1({\rm mod} \, 4) \ {\rm and} \ A_2=1({\rm mod} \, 4),$$
Theorem {\ref{erg3}} implies that $(S_1(0),f,\mu)$ is ergodic.
\end{ex}
We will now extend this to showing that if $4\alpha\leq\beta$ and $p=2$, then $(S_{\beta\over2}(0),f,\mu)$ is ergodic.\\
\\
Let a function $f(x): S_{p^l}(0)\rightarrow S_{p^l}(0)$ be given.
For $s\in\mathbb{Z}$ consider $x=g(t)=p^st$
$(t=g^{-1}(x)=p^{-s}x)$ then it is easy to see that
$f(g(t)):S_{p^{l+s}}(0)\rightarrow S_{p^l}(0)$. Consequently,
$g^{-1}(f(g(t))):S_{p^{l+s}}(0)\rightarrow S_{p^{l+s}}(0)$.

Let $\mathcal B$ (resp. $\mathcal B_s$) be the algebra generated by closed
subsets of $S_{p^l}(0)$ (resp. $S_{p^{l+s}}(0)$), and $\mu$ (resp. $\mu_s$)
 be the normalized Haar measure
on $\mathcal B$ (resp. $\mathcal B_s$).

Denote $(f\circ g)(t)=f(g(t)).$

\begin{thm}{\label{erg1}}\cite{RS2} The dynamical system
$(S_{p^l}(0), \, f, \, \mu)$ is ergodic iff
$(S_{p^{l+s}}(0), \, g^{-1}\circ f\circ g, \, \mu_s)$ is ergodic for some $s\in\mathbb{Z}$.
\end{thm}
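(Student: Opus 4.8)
The plan is to show that the scaling map $g(t)=p^{s}t$ is not merely a topological conjugacy between the two systems but an honest measure-preserving isomorphism of the probability spaces $(S_{p^{l+s}}(0),\mu_s)$ and $(S_{p^{l}}(0),\mu)$. Once this is established, ergodicity—being invariant under measure-preserving isomorphism—transfers in both directions; and since the resulting equivalence holds separately for each fixed $s$, both implications of the ``for some $s$'' statement follow (the forward direction by taking any $s$, the backward direction by using the equivalence for the witnessing $s$).

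First I would record the action of $g$ on balls. Since $|g(t)-g(c)|_p=|p^{s}|_p\,|t-c|_p=p^{-s}|t-c|_p$, the map $g$ carries a ball $V_{\rho}(c)\subset S_{p^{l+s}}(0)$ bijectively onto $g(V_{\rho}(c))=V_{p^{-s}\rho}(p^{s}c)\subset S_{p^{l}}(0)$, and $g$ is a homeomorphism with $g^{-1}(x)=p^{-s}x$ behaving identically. In particular $g$ sends the generating closed balls of $\mathcal B_s$ to generating closed balls of $\mathcal B$, inducing a bijection of the two algebras that respects disjoint unions.

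The key step is checking that $g$ preserves the \emph{normalized} Haar measures. Using $\bar\mu(S_{p^{l}}(0))=p^{l}(1-\tfrac1p)$ and the analogous total mass $p^{l+s}(1-\tfrac1p)$ for $\mathcal B_s$, I would compute
$$\mu\big(g(V_{\rho}(c))\big)=\frac{p^{-s}\rho}{p^{l}(1-\frac1p)}=\frac{\rho}{p^{l+s}(1-\frac1p)}=\mu_s\big(V_{\rho}(c)\big),$$
so that the $p^{-s}$ contraction of radii cancels exactly against the $p^{s}$ ratio of the two normalizing constants. By additivity over disjoint balls this yields $\mu(g(E))=\mu_s(E)$ for every $E\in\mathcal B_s$, i.e.\ $g_{*}\mu_s=\mu$, and so $g$ is a measure-preserving isomorphism. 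This bookkeeping—making sure the radius contraction and the change in total mass cancel—is the only genuinely delicate point; everything afterward is formal.

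Finally I would transfer ergodicity through the conjugacy. Writing $F=g^{-1}\circ f\circ g$, bijectivity of $g$ gives $F^{-1}(E)=g^{-1}\big(f^{-1}(g(E))\big)$, whence $F^{-1}(E)=E$ holds iff $f^{-1}(g(E))=g(E)$; that is, $E\subset S_{p^{l+s}}(0)$ is $F$-invariant iff $g(E)$ is $f$-invariant. Combined with $\mu_s(E)=\mu(g(E))$, the invariant sets of measure $0$ or $1$ of the two systems correspond bijectively. Consequently $(S_{p^{l}}(0),f,\mu)$ is ergodic iff $(S_{p^{l+s}}(0),g^{-1}\circ f\circ g,\mu_s)$ is ergodic, and since this equivalence is valid for every $s\in\mathbb{Z}$, the claimed characterization follows.
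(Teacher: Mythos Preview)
Your argument is correct: the scaling $g(t)=p^{s}t$ is a bimeasurable bijection between $S_{p^{l+s}}(0)$ and $S_{p^{l}}(0)$, and your computation showing that the $p^{-s}$ contraction of radii is exactly cancelled by the ratio of normalizing constants verifies that $g_{*}\mu_s=\mu$. The transfer of invariant sets through the conjugacy then gives the equivalence of ergodicity for each fixed $s$, from which the ``iff \dots\ for some $s$'' formulation follows immediately.

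As for comparison with the paper: this theorem is not proved in the present paper at all---it is quoted verbatim from \cite{RS2}, and the authors give no argument here. Your proof is the natural (and essentially the only reasonable) one, and is presumably what appears in \cite{RS2}: the result is a straightforward instance of the general fact that ergodicity is preserved under measure-theoretic isomorphism, applied to the concrete isomorphism given by multiplicative rescaling. There is nothing to add or correct.
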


\begin{thm}{\label{erg2}}
If $p=2$ and $4\alpha\leq\beta$, then the dynamical system
$(S_{\beta\over 2}(0), f, \mu)$ is ergodic.
\end{thm}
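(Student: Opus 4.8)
The plan is to reduce the statement to the ergodicity already established in the Example, namely that of $(S_1(0),\,\cdot\,,\mu)$, by first transporting the sphere $S_{\beta/2}(0)$ onto the unit sphere via the scaling conjugacy of Theorem \ref{erg1}, and then verifying the ergodicity criterion of Theorem \ref{erg3}. First I would observe that $4\alpha\le\beta$ forces $\alpha<\beta$, so by Proposition \ref{1} we sit in the case $\alpha<\beta=\delta$, and since $\alpha\le\beta/4<\beta/2<\beta$ we get $\beta/2\in A$; hence $S_{\beta/2}(0)$ is invariant by the Corollary. As $x_2\in\Q_2$, the number $\beta=|x_2|_2$ is a power of $2$, so I would write $\beta/2=2^{l}$ and apply Theorem \ref{erg1} with $s=-l$, i.e. conjugate $f$ by $g(t)=2^{-l}t$. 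A direct substitution shows that $g^{-1}\circ f\circ g$ is again of the form (\ref{fa}) with parameters $a\,2^{l}$, $b\,2^{2l}$, whose associated radii are $2$ and $2\alpha/\beta\le 1/2$. By Theorem \ref{erg1} the original system on $S_{\beta/2}(0)$ is ergodic iff the conjugated system on $S_1(0)$ is, so from now on I may assume $\beta=2$ and $\alpha\le1/2$, relabelling the conjugated parameters as $a,b$. The decisive point is that for $p=2$ one has $S_1(0)=1+2\Z_2$, which is exactly the domain to which Theorem \ref{erg3} applies.

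Next I would put $f$ into the integral form required by Theorem \ref{erg3}. From Proposition \ref{1} we have $|a|_2=\delta=\beta=2$ and $|b|_2=\alpha\beta\le1$, so $2a$ is a $2$-adic unit (hence odd) while $2b\in2\Z_2$ (hence even). Clearing denominators gives $f=P/Q$ with
\[
P(x)=2x^3+2a\,x^2+2b\,x,\qquad Q(x)=2a\,x+2b,
\]
and both $P,Q\in\Z_2[x]$; a short norm computation (using $|x-x_1|_2=1$ and $|x-x_2|_2=2$ for $x\in S_1(0)$) confirms that $P$ and $Q$ each send $1+2\Z_2$ into $1+2\Z_2$, as Theorem \ref{erg3} demands. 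Reading off the coefficient sums yields
\[
A_1=2+2b,\quad A_2=2a,\quad B_1=2a,\quad B_2=2b,
\]
exactly as in the Example. Thus $A_2=B_1=2a$ is odd while $A_1=2+2b$ and $B_2=2b$ are even.

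Finally I would run the case analysis of Theorem \ref{erg3}. Since $A_2$ is odd and each of the conditions (1)--(4) requires $A_2\equiv0$ or $2\pmod4$, none of the non-interchanged cases can ever hold; the relevant possibility is always situation (5), the criterion applied with numerator and denominator swapped. After the swap the four sums become $2a,\,2b,\,2+2b,\,2a$ in the roles of $A_1,A_2,B_1,B_2$, and I would check the four residue classes: writing $2a\equiv1$ or $3\pmod4$ and $2b\equiv0$ or $2\pmod4$, the pairs $(2a,2b)\bmod4$ equal to $(1,2),(3,2),(1,0),(3,0)$ fall respectively into cases (1),(2),(3),(4) of Theorem \ref{erg3}. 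Because $2a$ is necessarily an odd unit and $2b$ necessarily even, exactly one of these four always occurs, so situation (5) holds unconditionally and $f$ is ergodic on $S_1(0)$; undoing the conjugacy through Theorem \ref{erg1} then yields ergodicity of $(S_{\beta/2}(0),f,\mu)$. The one genuinely delicate step is this closing bookkeeping --- correctly matching the swapped sums against the congruence patterns of (1)--(4) and confirming that the four parity classes are exhausted --- which is the computational heart of the argument and the natural generalization of the Example.
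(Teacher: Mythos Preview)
Your argument is correct and follows essentially the same route as the paper: conjugate $S_{\beta/2}(0)$ onto $S_1(0)=1+2\Z_2$ via the scaling map of Theorem~\ref{erg1}, rewrite $f$ as a quotient of $\Z_2$-coefficient polynomials mapping $1+2\Z_2$ to itself, and then verify the residue conditions of Theorem~\ref{erg3}. Your closing case analysis is in fact more careful than the paper's: the paper asserts $2^{m}a\equiv 1\pmod 4$, whereas from $a\in 2^{-m}(1+2\Z_2)$ one only gets that $2^{m}a$ is odd (either $1$ or $3\bmod 4$); your exhaustive check of the four possibilities $(2a,2b)\bmod 4\in\{(1,2),(3,2),(1,0),(3,0)\}$ against the interchanged criterion (5) fills this gap cleanly.
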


\begin{proof} Let $\alpha=|x_1|_2=2^q$ and $\beta=|x_2|_2=2^m$. Then
from $4\alpha\leq\beta$ we get $q\leq m-2$.
It is known that $x_1+x_2=-a$ and $x_1x_2=b$.
Consequently,  $a\in 2^{-m}(1+2\mathbb Z_2)$ and $b\in 2^{-q-m}(1+2\mathbb Z_2)$.

In $f:S_{2^{m-1}}(0)\rightarrow S_{2^{m-1}}(0)$ we change $x$ by
$x=g(t)=2^{1-m}t$. Then,
$|x|_2=2^{m-1}|t|_2=2^{m-1}$, $|t|_2=1$ and the function
$f(g(t)):S_1(0)\rightarrow S_{2^{m-1}}(0)$ has the following form
$$f(g(t))={{2^{3-3m}t^3+2^{2-2m}at^2+2^{1-m}bt}\over{2^{1-m}at+b}}.$$
Since $g^{-1}(f(g(t)))=2^{m-1}f(g(t))$ we have
$$|g^{-1}(f(g(t)))|_2=2^{1-m}|f(g(t))|_2=2^{1-m}2^{m-1}=1.$$
Thus, $g^{-1}(f(g(t))):S_1(0)\rightarrow S_1(0)$ and
\begin{equation}{\label{k}}
g^{-1}(f(g(t)))={{2t^3+2^mat^2+2^{2m-1}bt}\over{2^mat+2^{2m-1}b}}.
\end{equation}

For the numerator of (\ref{k}) we have $|2t^3|_2=2^{-1}$, $|2^mat^2|_2=1$ and
$|2^{2m-1}bt|_2=2^{q-m+1}\leq 2^{-1}$. Consequently,
$$2t^3+2^mat^2+2^{2m-1}bt=:\gamma_1(t), \ \ \mbox{is such that} \ \ \gamma_1: 1+2\mathbb Z_2\rightarrow 1+2\mathbb Z_2.$$
For the denominator (\ref{k}) we have $|2^mat|_2=1$ and
$|2^{2m-1}b|_2=2^{q-m+1}\leq 2^{-1}$. Therefore
$$2^mat+2^{2m-1}b=:\gamma_2(t)    \ \ \mbox{is such that} \ \  \gamma_2: 1+2\mathbb Z_2\rightarrow 1+2\mathbb Z_2.$$
Hence the function (\ref{k}) satisfies all conditions of Theorem \ref{erg3}.
Also, we note that if
$$2+2^{2m-1}b=2({\rm mod} \, 4), \ {\rm then} \ 2^{2m-1}=0({\rm mod} \, 4)$$
and vice versa.
In addition,
$$2^ma=1({\rm mod} \, 4).$$
Therefore using this theorem we conclude that the dynamical system
$(S_1(0), \, g^{-1}\circ f\circ g, \, \mu_{1-m})$ is ergodic. Consequently,  by Theorem \ref{erg1},
$(S_{2^{m-1}}(0), f, \mu)$, i.e.,  $(S_{\beta\over 2}(0), f,
\mu)$ is ergodic.
\end{proof}

\section{Uniformly local properties and periodic orbits of the rational function}

We will mainly be interested in the case where $X\subset \Q_p$ is a compact open subset of $\Q_p$. As before,
we assume $f : X \to X$ is well-defined and consider the dynamical system $(X,f,\mu)$.

\begin{defn}\label{d1}{\cite{DS}} Let $X$ be a compact open subset of $\Q_p$ and let $f : X \to X$ be a
rational function. Assume, in addition, that $f'(x)$ has no roots in $X$. Then $f$ is uniformly
locally scaling with local scalar $C(a) = |f'(a)|_p$; i.e., there exist $r > 0$ such that for any $a\in X$,
$|f(x)-f(y)|_p = |f'(a)|_p |x-y|_p$ whenever $x, y \in V_r(a)$.
\end{defn}

We can define the notions (uniformly) locally isometric, (uniformly) locally $\rho$-Lipschitz, (uniformly) locally bounded scaling as follows.

\begin{defn}\label{d2}{\cite{DS}} A map $f : X \to X$ is uniformly locally isometric (ULI) if there exists a constant $r > 0$
such that for all $a \in X$, $|f(x)-f(y)|_p=|x-y|_p$ whenever $x, y \in V_r(a)$.
\end{defn}
\begin{defn}\label{d3}{\cite{DS}} Let $\rho> 0$. A map $f : X \to X$ is uniformly locally $\rho$-Lipschitz (UL$\rho$L) if there exists a
constant $r > 0$ such that for all $a \in X$, $|f(x)-f(y)|_p\leq\rho|x-y|_p$ whenever $x, y \in V_r(a)$.
\end{defn}
\begin{defn}\label{d4}{\cite{DS}} A map $f : X \to X$ is uniformly locally bounded scaling (ULBS) if there exists a
constant $C > 0$ such that $f$ is uniformly locally scaling and $C(a)\leq C$ for all $a\in X$.
\end{defn}

By a similar argument as in Definition \ref{d1}, we have the following criteria.
\begin{pro}\label{p1}{\cite{DS}} Let $X$ be a compact open subset of $\Q_p$ and let $f : X \to X$ be a rational function.
Then
\begin{itemize}
\item[1.] $f$ is ULI if and only if $|f'(a)|_p = 1$ for all $a \in X$.
\item[2.] $f$ is UL$\rho$L if and only if $|f'(a)|_p\leq\rho$ for all $a\in X$.
\item[3.] $f$ is ULBS if and only if $f'(x)$ has no root in $X$.
\end{itemize}
\end{pro}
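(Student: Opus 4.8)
The plan is to reduce all three equivalences to a single fact about the \emph{divided difference} of $f$, and then to read off each statement by combining that fact with the ultrametric inequality. Writing $f=P/Q$ with $Q$ nonvanishing on the compact set $X$ — legitimate because $f:X\to X$ is everywhere defined, so $f$ has no pole in $X$ — I would introduce
$$\Phi(x,y)=\frac{f(x)-f(y)}{x-y}=\frac{\big(P(x)Q(y)-P(y)Q(x)\big)/(x-y)}{Q(x)Q(y)}.$$
Since $x-y$ divides the numerator as a polynomial and $Q(x)Q(y)\neq 0$ on $X\times X$, the function $\Phi$ is a rational function with no poles on the compact set $X\times X$. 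In particular $\Phi$ is uniformly continuous there, $\Phi(a,a)=f'(a)$, and $\Phi$ is uniformly Lipschitz: there is a constant $L$ with $|\Phi(x,y)-\Phi(x',y')|_p\le L\max\{|x-x'|_p,|y-y'|_p\}$ on all of $X\times X$. This $L$ depends only on $f$ and $X$, not on the base point $a$, which is exactly what makes every property below \emph{uniform}.

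For the forward implications I would set $y=a$ and let $x\to a$ in the defining inequalities. Because $\Phi$ is continuous with $\Phi(a,a)=f'(a)$, the quotient $|f(x)-f(a)|_p/|x-a|_p=|\Phi(x,a)|_p$ tends to $|f'(a)|_p$; hence ULI forces $|f'(a)|_p=1$ and UL$\rho$L forces $|f'(a)|_p\le\rho$ for every $a$. For part~3, if $f'$ had a root $a_0\in X$ then the scaling identity of Definition \ref{d1} would read $|f(x)-f(y)|_p=0$ near $a_0$, i.e. $f$ would be locally constant there, impossible for a nonconstant rational function; so ULBS forces $f'$ to be root-free.

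For the reverse implications I would run the Lipschitz estimate through the ultrametric inequality. Fixing $r$ with $Lr\le\rho$ and taking $x,y\in V_r(a)$ gives $|\Phi(x,y)-f'(a)|_p\le Lr\le\rho$, so $|\Phi(x,y)|_p\le\max\{|\Phi(x,y)-f'(a)|_p,\,|f'(a)|_p\}\le\rho$ whenever $|f'(a)|_p\le\rho$, whence $|f(x)-f(y)|_p=|x-y|_p\,|\Phi(x,y)|_p\le\rho|x-y|_p$ and UL$\rho$L holds. For ULI the hypothesis $|f'(a)|_p=1$ lets me choose $r<1/L$, so that $|\Phi(x,y)-f'(a)|_p\le Lr<1=|f'(a)|_p$; by the sharp rule (property~3.1: $|u+v|_p=\max$ when the norms differ) this yields $|\Phi(x,y)|_p=1$ exactly, i.e. $|f(x)-f(y)|_p=|x-y|_p$. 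For ULBS, root-freeness plus compactness gives $|f'(a)|_p\ge c>0$ on $X$, so choosing $r<c/L$ reproduces the exact scaling $|\Phi(x,y)|_p=|f'(a)|_p=C(a)$ of Definition \ref{d1}, while $C=\max_{a\in X}|f'(a)|_p<\infty$ bounds the scalars.

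I expect the one genuinely delicate point to be the reverse direction of part~2 at the zeros of $f'$: there the exact scaling identity of Definition \ref{d1} is simply unavailable, so it cannot be quoted directly. The resolution is precisely the estimate above — the uniform Lipschitz bound on $\Phi$ together with the strong triangle inequality still forces $|\Phi(x,y)|_p\le\rho$ on a ball of radius $r$ independent of $a$, even where $f'$ vanishes (there the left side is in fact $o(|x-y|_p)$). Keeping $r$ independent of the base point, which rests on the uniform Lipschitz constant $L$ furnished by compactness of $X\times X$, is what upgrades each conclusion from merely local to uniformly local.
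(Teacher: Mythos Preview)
The paper does not give its own proof of this proposition; it is simply quoted from \cite{DS}, so there is nothing in the paper to compare against. Your argument is correct and self-contained: the divided-difference $\Phi(x,y)=(f(x)-f(y))/(x-y)$ is a pole-free rational function on the compact set $X\times X$, hence uniformly Lipschitz there with $\Phi(a,a)=f'(a)$, and the ultrametric rule $|u+v|_p=\max\{|u|_p,|v|_p\}$ for unequal norms then pins down $|\Phi(x,y)|_p$ on balls of a radius independent of $a$. Two small points are worth making explicit. First, in the forward direction of part~3 you tacitly assume $f$ is nonconstant (otherwise the constant map is trivially ULBS with $C(a)\equiv 0$ while $f'\equiv 0$). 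Second, you use that any local scalar $C(a)$ appearing in Definition~\ref{d4} necessarily equals $|f'(a)|_p$; this is not part of that definition, but follows at once by fixing $y=a$ and letting $x\to a$ in the scaling identity, exactly as in your forward argument for parts~1 and~2.
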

\begin{rk} It is easy to see that
$$ULI\Rightarrow UL\rho L\Rightarrow ULBS.$$

\end{rk}
\begin{thm}\label{t7}
Let $r\in A$ and suppose $x\in S_r(0)$ is arbitrary. Then
$$|f'(x)|_p=1.$$
\end{thm}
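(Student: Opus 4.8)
The plan is to reduce the claim to a single strong-triangle-inequality estimate. First I would simplify $f$ by noting that $f(x)-x = \frac{x^3+ax^2+bx-x(ax+b)}{ax+b} = \frac{x^3}{ax+b}$, so that
$$f(x) = x + \frac{x^3}{ax+b}, \qquad f'(x) = 1 + \frac{x^2(2ax+3b)}{(ax+b)^2}.$$
Writing $g(x) = \frac{x^2(2ax+3b)}{(ax+b)^2}$ for the second summand, property (3.1) of the $p$-adic norm shows that as soon as $|g(x)|_p < 1$ we automatically get $|f'(x)|_p = |1+g(x)|_p = 1$, since $|1|_p = 1$. So the whole theorem comes down to proving
$$|x|_p^2\,|2ax+3b|_p < |ax+b|_p^2 \qquad \text{for all } x\in S_r(0),\ r\in A.$$

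For the numerator I would use the crude bound $|2ax+3b|_p \le \max\{|ax|_p,|b|_p\}$, valid for every prime because $|2|_p \le 1$ and $|3|_p \le 1$; this is the only place the primes $2$ and $3$ enter, and since a weak bound suffices, no subtleties about dividing by $2$ or $3$ arise. The denominator $|ax+b|_p$ I would compute exactly on each sphere from Proposition \ref{1}: recall $|a|_p = \delta$, $|b|_p = \alpha\beta$, with $\delta = \beta$ in the regime $\alpha < \beta$ and $\delta \le \alpha = \beta$ otherwise, so that $\delta \le \beta$ in every case. Comparing $|ax|_p = \delta r$ against $|b|_p = \alpha\beta$ then fixes $|ax+b|_p$ via (3.1). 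One checks along the way that $ax+b\neq 0$ on these spheres, since $|\hat x|_p=\alpha$ lies on the excluded radius $r=\alpha$, so $f'$ is genuinely defined there.

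The verification then splits along the definition of $A$, which in every regime lies in $(0,\alpha)\cup(\alpha,\beta)$. On $(0,\alpha)$ one has $\delta r \le \beta r < \alpha\beta$, hence $|ax+b|_p = \alpha\beta$ and $|2ax+3b|_p \le \alpha\beta$; since $r<\alpha\le\beta$ forces $r^2 < \alpha\beta$, the target inequality $r^2\,\alpha\beta < (\alpha\beta)^2$ holds strictly. On $(\alpha,\beta)$ (which occurs only when $\alpha<\beta=\delta$) one has $\delta r = \beta r > \alpha\beta$, hence $|ax+b|_p = \beta r$ and $|2ax+3b|_p \le \beta r$; then $r^2(\beta r) = \beta r^3 < \beta^2 r^2$ because $r<\beta$. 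In both cases $|g(x)|_p<1$, which finishes the proof.

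I expect the only real care-point, rather than a genuine obstacle, to be bookkeeping the norm comparisons so that each inequality stays strict. The slack always comes precisely from the two features built into $A$: the exclusion of the radius $r=\alpha$, where $|ax+b|_p$ could drop through cancellation, and the ceiling $r<\beta$. Getting $|ax+b|_p$ correct on each sub-interval, and confirming that the crude numerator bound $|2ax+3b|_p\le\max\{|ax|_p,|b|_p\}$ never erodes that strictness, is the one thing that must be tracked carefully.
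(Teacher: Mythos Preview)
Your proof is correct and takes a genuinely different route from the paper's. The paper expands $f'(x)$ as the quotient $\frac{2ax^3+3bx^2+a^2x^2+2abx+b^2}{(ax+b)^2}$, computes the $p$-adic norm of each of the five numerator terms and three denominator terms separately (with side cases for $p=2$ and $p=3$ because of the factors $2$ and $3$), argues that the maxima in numerator and denominator coincide, and concludes the ratio is $1$. Your decomposition $f'(x)=1+g(x)$ with $g(x)=\frac{x^2(2ax+3b)}{(ax+b)^2}$ reduces the problem to the single estimate $|g(x)|_p<1$, and your crude bound $|2ax+3b|_p\le\max\{|ax|_p,|b|_p\}$ absorbs the constants $2,3$ uniformly for all primes, so no case split on $p$ is needed. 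This is cleaner and more robust: the paper's assertion that ``each norm is distinct'' is doing real work and is not carefully checked there, whereas your argument only needs the strict inequalities $r<\alpha$ and $r<\beta$ that are built into $A$.

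One small inaccuracy: you write $|\hat x|_p=\alpha$, but this holds only when $\delta=\beta$ (the cases $\alpha<\beta=\delta$ and $\delta=\alpha=\beta$); when $\delta<\alpha=\beta$ one has $|\hat x|_p=\alpha^2/\delta>\alpha$. Your conclusion that $\hat x\notin S_r(0)$ for $r\in A$ is nonetheless correct in every case, since $A\subset(0,\alpha)$ in that regime.
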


\begin{proof}
We see that
$$|f'(x)|_p=\big|\dfrac{2ax^3+3bx^2+a^2x^2+2abx+b^2}{a^2x^2+2abx+b^2}\big|_p.$$
Calculating the value of each norm, we have that
$$|2ax^3|_p=\left\{\begin{array}{ll}
\delta r^3 \ \ {\rm if} \ \ p\neq2\\[2mm]
\frac{1}{2}\delta r^3 \ \ {\rm if } \ \ p=2,
\end{array}
\right.
\ \ \ \ \ |3bx^2|_p=\left\{\begin{array}{ll}
\alpha\beta r^2 \ \ {\rm if} \ \ p\neq3\\[2mm]
\frac{1}{3}\alpha\beta r^2 \ \ {\rm if } \ \ p=3,
\end{array}
\right.
$$
$$|a^2x^2|_p=\delta^2r^2, \ \ \ \ \ |2abx|_p=\left\{\begin{array}{ll}
\delta\alpha\beta r \ \ {\rm if} \ \ p\neq2\\[2mm]
\frac{1}{2}\delta\alpha\beta r\ \ {\rm if } \ \ p=2,
\end{array}
\right. \ \
\mbox{and} \ \
|b^2|_p=\alpha^2\beta^2.$$
Since each norm is distinct for $r\in A$, it follows that
$$|2ax^3+3bx^2+a^2x^2+2abx+b^2|_p=\max\{|2ax^3|_p,|3bx^2|_p,|a^2x^2|_p,|2abx|_p,|b^2|_p\}$$
and
$$|a^2x^2+2abx+b^2|_p=\max\{|a^2x^2|_p,|2abx|_p,|b^2|_p\}.$$
Now, if $0<r<\alpha$ we have that
$$|f'(x)|_p=\dfrac{\alpha^2\beta^2}{\alpha^2\beta^2}=1$$
and if $\alpha<r<\beta$ then
$$|f'(x)|_p=\dfrac{\delta^2r^2}{\delta^2r^2}=1.$$
Thus, $|f'(x)|_p=1$ for all $x\in S_r(0)$.\\
\end{proof}

\begin{cor}\label{c4}
If $r\in A$ then $f:S_r(0)\rightarrow S_r(0)$ is uniformly locally isometric.
\end{cor}

\begin{cor}\label{c5}
If dynamical system $(S_r(0), f, \mu)$, $r\in A$ has $k$-periodic orbit $$y_0\to y_1\to ...\to y_k\to y_0$$ then $|(f^k(x))'_{x=y_i}|_p=1$ for all $i\in\{0,1,...,k\}$, i.e., character of periodic points is indifferent.
\end{cor}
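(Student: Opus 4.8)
The plan is to reduce the statement to Theorem \ref{t7} by means of the chain rule. First I would record that, since $r\in A$, the sphere $S_r(0)$ is invariant under $f$ (this is the characterization of invariant spheres established in Section 3: $S_r(0)$ is invariant for $f$ if and only if $r\in A$). Consequently, starting from $y_0\in S_r(0)$, every iterate stays on the same sphere, so all the periodic points $y_0,y_1,\dots,y_{k-1}$ lie in $S_r(0)$ and each is a legitimate input for Theorem \ref{t7}.

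Next I would apply the chain rule to the composition $f^k=\underbrace{f\circ\cdots\circ f}_{k}$. For each periodic point $y_i$ this yields
$$(f^k)'(y_i)=\prod_{j=0}^{k-1}f'\big(f^j(y_i)\big)=\prod_{j=0}^{k-1}f'(y_{i+j}),$$
where the indices $i+j$ are read modulo $k$, so that as $j$ runs from $0$ to $k-1$ the argument $y_{i+j}$ runs over all points of the orbit. By the invariance noted above, every such $y_{i+j}$ belongs to $S_r(0)$.

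Finally, by Theorem \ref{t7} each factor satisfies $|f'(y_{i+j})|_p=1$, and the multiplicativity $|xy|_p=|x|_p|y|_p$ of the $p$-adic norm gives
$$\big|(f^k)'(y_i)\big|_p=\prod_{j=0}^{k-1}\big|f'(y_{i+j})\big|_p=1$$
for every $i$, which is precisely the asserted indifference of the periodic points. There is no genuine obstacle here: the analytic content is entirely absorbed by Theorem \ref{t7}, and the only point deserving a moment's care is confirming that the whole orbit sits inside a single invariant sphere $S_r(0)$, so that Theorem \ref{t7} may be invoked at each of the $k$ points; the remainder is just the chain rule together with the multiplicativity of $|\cdot|_p$.
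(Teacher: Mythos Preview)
Your argument is correct and is exactly the intended one: the paper states this result as a corollary of Theorem \ref{t7} without supplying a separate proof, and the chain rule together with multiplicativity of $|\cdot|_p$ (applied after noting that invariance of $S_r(0)$ keeps the whole orbit on the sphere) is precisely what makes it immediate.
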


Now suppose that $\{t_1, t_2\}$ is a $2$-periodic orbit in $S_r(0)$.

\begin{thm}\label{t9}
If $f(t_1)=t_2$ and $f(t_2)=t_1$, then for any $r>0$ we have $$f(S_r(t_1)\setminus\mathcal P)\subset S_r(t_2) \ \ \mbox{and} \ \ f(S_r(t_2)\setminus\mathcal P)\subset S_r(t_1).$$
\end{thm}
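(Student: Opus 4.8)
The plan is to collapse the two inclusions into a single isometry statement and then extract that from the difference formula for $f$. Since $f(t_1)=t_2$, for any $x\in S_r(t_1)\setminus\mathcal P$ we have $|f(x)-t_2|_p=|f(x)-f(t_1)|_p$, so the first inclusion is exactly the claim that
$$|f(x)-f(t_1)|_p=|x-t_1|_p\qquad(x\in S_r(t_1)\setminus\mathcal P),$$
i.e. that $f$ acts isometrically at $t_1$; the second inclusion is the same statement with the roles of $t_1,t_2$ reversed, so it suffices to prove one of them. Deleting $\mathcal P$ guarantees $ax+b\neq 0$ (indeed $\hat x\in\mathcal P_0$) and keeps the orbit away from the pole, so every quantity below is well defined.

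Write $R=|t_1|_p=|t_2|_p$ for the radius of the sphere carrying the orbit. Because $f(t_1)=t_2$ returns to $S_R(0)$, the radial map fixes $R$, so $R\in A$ and $S_R(0)$ is an invariant sphere. For $r<R$ the closed ball $V_r(t_1)$ is contained in $S_R(0)$ (if $|x-t_1|_p\le r<R=|t_1|_p$ then $|x|_p=R$), so Lemma \ref{ab} applied with $c=t_1$ gives $f(V_r(t_1))=V_r(f(t_1))=V_r(t_2)$ together with $|f(x)-f(t_1)|_p=|x-t_1|_p$. Hence $f(x)\in S_r(t_2)$, which settles this range of radii directly.

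For the remaining radii I would argue from the exact difference formula. Clearing denominators and cancelling the common factor $x-t_1$ (as in the computation preceding Lemma \ref{ab}) yields
$$f(x)-f(t_1)=(x-t_1)\,\frac{N(x,t_1)}{(ax+b)(at_1+b)},\qquad N(x,y)=axy(x+y+a)+b\big(x^2+xy+y^2+a(x+y)+b\big),$$
so the goal reduces to showing that the quotient has $p$-adic norm $1$. Two facts drive this. First, since $f(t_1)=t_2$ has the same norm as $t_1$, formula (\ref{f2}) forces the denominator relation $|at_1+b|_p=|t_1-x_1|_p\,|t_1-x_2|_p$ (and the analogous one at $t_2$), which pins down $|(ax+b)(at_1+b)|_p$. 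Second, one evaluates $|N(x,t_1)|_p$ by the strong triangle inequality, splitting into the cases $\alpha<\beta=\delta$ and $\delta\le\alpha=\beta$ exactly as in the proofs of Lemma \ref{ab} and Theorem \ref{t7}; in each case a single monomial of $N$ strictly dominates, and its norm cancels against the denominator, leaving quotient norm $1$ and hence the desired isometry.

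The main obstacle is precisely this ultrametric bookkeeping for $N(x,t_1)$: one must check, in every sub-case and for the relevant position of $r$ against $\alpha,\beta,R$, that the competing monomials $axy(x+y)$, $a^2xy$, $b(x^2+xy+y^2)$, $ab(x+y)$ and $b^2$ have pairwise distinct norms, so that equality holds in the strong triangle inequality. The factorizations $N(x_1,y)=-x_1^2y(y-x_2)$ and $N(x_2,y)=-x_2^2y(y-x_1)$, obtained from $x_i^2+ax_i+b=0$, serve as convenient consistency checks that the dominant term has been correctly identified. Once $|N(x,t_1)|_p=|(ax+b)(at_1+b)|_p$ is established, the isometry $|f(x)-f(t_1)|_p=|x-t_1|_p$ follows, and with it both inclusions of the theorem.
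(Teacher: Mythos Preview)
The paper's entire proof is your second paragraph. It simply cites Corollary~\ref{c4} (the ULI property coming from $|f'|_p=1$ on the invariant sphere) to conclude $|f(x)-f(t_1)|_p=|x-t_1|_p$, which is precisely the isometry you extract from Lemma~\ref{ab}; these two lemmas encode the same fact, so on the range where $V_r(t_1)\subset S_R(0)$ your argument and the paper's coincide. The paper does not split into the cases $r<R$ and $r\ge R$, does not write down the difference formula $f(x)-f(t_1)=(x-t_1)N(x,t_1)/\big((ax+b)(at_1+b)\big)$, and does not attempt any monomial bookkeeping.

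Your third and fourth paragraphs go beyond the paper, and the programme you outline there cannot be completed: the claim that ``a single monomial of $N$ strictly dominates'' breaks down once $x$ leaves the invariant sphere $S_R(0)$, and in fact the stated inclusion fails for such radii. For a concrete obstruction, suppose $R<\alpha$ and take $x=x_1$, a zero of the numerator of $f$, so that $f(x_1)=0$. Then the forward orbit of $x_1$ is $\{0\}$, hence $x_1\notin\mathcal P$, and $|x_1-t_1|_p=\alpha$ while $|f(x_1)-t_2|_p=|0-t_2|_p=R\neq\alpha$; thus $f\big(S_\alpha(t_1)\setminus\mathcal P\big)\not\subset S_\alpha(t_2)$, and no estimate on $N(x,t_1)$ will produce quotient norm $1$ at this point. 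So your $r<R$ argument matches the paper exactly; the extension you sketch for larger $r$ is extra work the paper does not do, and it would run into genuine counterexamples rather than mere bookkeeping difficulties.
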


\begin{proof}
Suppose that $x\in S_r(t_1)$. That is, $|x-t_1|_p=r$. Now, by Corollary \ref{c4} we have
$$|f(x)-t_2|_p=|f(x)-f(t_1)|_p=|x-t_1|_p=r,$$
which implies that $f(x)\in S_r(t_2)$, i.e., $f(S_r(t_1)\setminus\mathcal P)\subset S_r(t_2)$.

By symmetry, we also have that $f(S_r(t_2)\setminus\mathcal P)\subset S_r(t_1)$ for any arbitrary $r$.
\end{proof}

We now consider the dynamical system $(S_r(0), \, f, \, \mu)$, when it is not ergodic.

We have that $x\in S_r(0)$ has the canonical form $x=p^{\gamma(x)}(x_0+x_1p+x_2p^2+...)$ and coefficients $a$ and $b$ have the canonical forms $$\ a=p^{\gamma(a)}(a_0+a_1p+a_2p^2+...) \ \ {\rm and} \ \ b=p^{\gamma(b)}(b_0+b_1p+b_2p^2+...).$$
In general, $f(x)$ has the canonical form
$$f(x)=x(1+\dfrac{x^2}{ax+b})=p^{\gamma(x)}(f_0+f_1p+f_2p^2+...).$$
Let $\dfrac{x^2}{ax+b}$ have the canonical form
$$\dfrac{x^2}{ax+b}=p^s(s_0+s_1p+s_2p^2+...).$$
\begin{pro}\label{p1}
If $x\in{S}_r(0)$ and $r\in{A}$, then for the function $f(x)=p^{\gamma(x)}(f_0+f_1p+f_2p^2+...),$
$f_i=x_i$ iff $i<s$.
\end{pro}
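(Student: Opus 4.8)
The plan is to work additively, writing
$f(x)-x=\dfrac{x^3}{ax+b}=x\cdot\dfrac{x^2}{ax+b}$, and to read off the digits of $f(x)$ by controlling the $p$-adic valuation of this difference rather than by grinding through the digit-by-digit product, which would force me to track carries. Writing $v_p$ for the $p$-adic valuation (so $|y|_p=p^{-v_p(y)}$), I would first record that, since $r\in A$, the sphere $S_r(0)$ is invariant, so $|f(x)|_p=|x|_p=r$; hence $f(x)$ and $x$ share the same leading power $p^{\gamma(x)}$, which is exactly what makes the comparison of their expansions $f_0+f_1p+\cdots$ and $x_0+x_1p+\cdots$ meaningful.

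Next I would compute the valuation of the difference. By the definition of $s$ the factor $\dfrac{x^2}{ax+b}=p^s(s_0+s_1p+\cdots)$ has valuation exactly $s$, its leading digit $s_0$ being nonzero, while $x$ has valuation $\gamma(x)$; since valuations are additive under multiplication, $f(x)-x=x\cdot\dfrac{x^2}{ax+b}$ has valuation exactly $\gamma(x)+s$. (One may also obtain $s$ explicitly from Lemma \ref{ab2}, but only the valuation is needed, and this also forces $s\geq 0$, consistent with the nonnegative indexing of the digits.) Equivalently, $f(x)\equiv x \pmod{p^{\gamma(x)+s}}$ while $f(x)\not\equiv x \pmod{p^{\gamma(x)+s+1}}$.

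Finally I would translate these two congruences into statements about digits. Dividing the first congruence by $p^{\gamma(x)}$ gives $\sum_i f_ip^i\equiv\sum_i x_ip^i \pmod{p^{s}}$; since the residue of a $p$-adic integer modulo $p^{s}$ records precisely its digits of index $<s$, this yields $f_i=x_i$ for every $i<s$, the asserted equality. Dividing the second congruence by $p^{\gamma(x)}$ shows $\sum_i f_ip^i\not\equiv\sum_i x_ip^i \pmod{p^{s+1}}$; combined with the agreement modulo $p^{s}$ just established, this forces the first discrepancy to occur exactly at index $s$, i.e. $f_s\neq x_s$, which makes the range $i<s$ sharp and gives the stated characterization. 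The one point requiring care—and the only place a naive approach stumbles—is that $p$-adic addition propagates carries upward, so the equality of digits cannot be read directly off the term-by-term sum $(x_0+x_1p+\cdots)+p^{s}(\cdots)$; phrasing everything through congruences modulo powers of $p$ sidesteps the carries entirely, since a carry never lowers the index at which two expansions first differ.
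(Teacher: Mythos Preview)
Your proof is correct and follows essentially the same route as the paper: both compute $f(x)-x=x\cdot\dfrac{x^2}{ax+b}$, read off its exact valuation $\gamma(x)+s$, and then translate this into agreement of digits. Your phrasing via congruences modulo $p^{s}$ and $p^{s+1}$ is in fact tidier than the paper's ``comparing coefficients of $p^i$'' (which, as you note, glosses over carries); the only extra thing the paper does is verify $s>0$ by a case split on $A_1$ versus $A_2$, whereas you record only $s\ge 0$---harmless for the proposition itself, but worth tightening since the corollary immediately afterward uses $f_0=x_0$.
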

\begin{proof}
If $r\in{A}=A_1\cup{A}_2$, then $r\in{A}_1$ or $r\in{A}_2$.\\
Case I: Suppose $r\in{A_1}$, i.e., $\alpha<\beta=\delta$. Then $0<r<\alpha$ or $\alpha<r<\beta.$
\begin{itemize}
\item If $0<r<\alpha$, then $|x|_p^2=r^2<|ax+b|_p=\alpha\beta$. So $|\dfrac{x^2}{ax+b}|_p=p^{-s}<1$ and $s>0$.
\item If $\alpha<r<\beta$, then $|x|_p^2=r^2<|ax+b|_p=r\delta$. Again, we have that $|\dfrac{x^2}{ax+b}|_p=p^{-s}<1$ and $s>0$.
\end{itemize}
Case II: Suppose $r\in{A}_2$. Then, $\delta\leq\alpha=\beta$ and $0<r<\alpha$.\\
This gives that $|x|_p^2=r^2<|ax+b|_p=\alpha^2$ and $|\dfrac{x^2}{ax+b}|_p=p^{-s}<1.$ So $s>0.$\\
\\
In all cases, we conclude that $s>0$.\\
Since
$$\dfrac{x^2}{ax+b}=p^s(s_0+s_p+s_2p^2+...)$$
and
$$x=p^{\gamma(x)}(x_0+x_1p+x_2p^2+...),$$
we have that $$f(x)-x=x(1+\dfrac{x^2}{ax+b})-x=x\dfrac{x^2}{ax+b}=p^{\gamma(x)+s}(s_0+s_1p+s_2p^2+...)(x_0+x_1p+x_2p^2+...).$$
Since $(s_0+s_1p+s_2p^2+...),(x_0+x_1p+x_2p^2+...)\in\mathbb{Z}_p^*=\{x\in\mathbb{C}_p: \ \ |x|_p=1\}$, it follows that
$(s_0+s_1p+s_2p^2+...)(x_0+x_1p+x_2p^2+...)\in\mathbb{Z}_p^*.$\\
Denote
$$f(x)-x=p^{\gamma(x)+s}(t_0+t_1p+t_2p^2+...)=p^{\gamma(x)}(t_0p^s+t_1p^{s+1}+...),$$ where $t_0\neq 0$.

We also have that
$$f(x)-x=p^{\gamma(x)}((f_0-x_0)+(f_1-x_1)p+...+(f_{s-1}-x_{s-1})p^{s-1}+(f_s-x_s)p^{s}+...).$$
Now, combining both forms of $f(x)-x$ gives
$$p^{\gamma(x)}(0+0p+0p^2+...+0p^{s-1}+t_0p^s+t_1p^{s+1}+...)$$$$=p^{\gamma(x)}((f_0-x_0)+(f_1-x_1)p+...+(f_{s-1}-x_{s-1})p^{s-1}+(f_s-x_s)p^{s}+...).$$
Comparing coefficients of $p^i$ gives that $f_i-x_i=0$ if and only if $i<s$.
\end{proof}
For $x=p^{\gamma(x)}(x_0+x_1p+x_2p^2+...)$, we say that $x\in{V}_{r\over{p}}(c_k)$ if $x_0=k$, where $k=1,2,...,p-1$.\\
In addition, $$S_r(0)=\bigcup_{k=1}^{p-1}V_{r\over p}(c_k)$$ which gives that for any $x\in{S}_r(0)$, there exists a $k$ such that $x\in{V}_{r\over p}(c_k)$.\\
\\
The next Corollary is an immediate result of Proposition \ref{p1}.
\begin{cor}\label{c6}
The ball $V_{r\over p}(c_k)$, where $k=1,2,...,p-1$, is invariant for $f$.
\end{cor}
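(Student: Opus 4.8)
The plan is to read off the invariance directly from the digit-matching statement of Proposition \ref{p1} (the one expressing $f_i=x_i$ for $i<s$), combined with the invariance of the ambient sphere. Recall that by definition a point $x=p^{\gamma(x)}(x_0+x_1p+x_2p^2+\dots)\in S_r(0)$ lies in $V_{r\over p}(c_k)$ precisely when its leading digit satisfies $x_0=k$. Hence to prove that the ball is invariant it suffices to show that for every such $x$ the image $f(x)$ again has leading digit equal to $k$.

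First I would record that, since $r\in A$, the sphere $S_r(0)$ is invariant (by the Corollary following the definition of $A$), so $f(x)\in S_r(0)$ and in particular $|f(x)|_p=r=|x|_p$. Therefore $\gamma(f(x))=\gamma(x)$, and $f(x)$ admits a canonical expansion $f(x)=p^{\gamma(x)}(f_0+f_1p+f_2p^2+\dots)$ with the same leading power of $p$, so that the leading digit $f_0$ is well-defined and nonzero. I would also note that for $r\in A$ one has $S_r(0)\cap\mathcal P=\emptyset$, because $\mathcal P\subset S_\alpha(0)\cup S_\beta(0)$ while $A$ avoids both $\alpha$ and $\beta$; thus $f$ is genuinely defined on all of $S_r(0)$ and there is no issue with points mapping to $\hat x$.

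Next I would invoke Proposition \ref{p1}. Its proof establishes that the exponent $s$ in the expansion $x^2/(ax+b)=p^s(s_0+s_1p+\dots)$ is strictly positive for every $r\in A$, hence $s\geq 1$. The proposition then asserts $f_i=x_i$ for all $i<s$; taking $i=0<s$ yields $f_0=x_0=k$. Consequently $f(x)\in V_{r\over p}(c_k)$, which gives $f(V_{r\over p}(c_k))\subseteq V_{r\over p}(c_k)$, i.e. the ball is invariant. There is essentially no obstacle once Proposition \ref{p1} is available: the whole content is that $s>0$ forces the leading digit to be preserved. The only points to state carefully are that membership in $V_{r\over p}(c_k)$ is exactly the condition $x_0=k$ on the leading digit, and that the valuation is unchanged, so that the leading digits of $x$ and $f(x)$ are compared at the same position — both of which follow at once from the invariance of $S_r(0)$ for $r\in A$.
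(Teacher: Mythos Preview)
Your proposal is correct and follows essentially the same approach as the paper: both invoke Proposition \ref{p1} to conclude that $s>0$ forces $f_0=x_0$, hence the leading digit is preserved and $V_{r/p}(c_k)$ is invariant. Your write-up is simply more careful about spelling out why the valuations agree and why $f$ is defined on all of $S_r(0)$, points the paper leaves implicit.
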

\begin{proof}
This follows from Proposition \ref{p1} and the fact that for $i=0$, $i<s$ which gives that $f_0=x_0$ for any $x_0\in\{1,2,...,p-1\}$.
\end{proof}

\begin{thm}\label{t10}
The ball $V_{r\over p^m}(c_k)$, where $k=1,2,...,p-1$ and $m\in\mathbb{N}$, is a minimal invariant ball if and only if one of the following holds:
\begin{itemize}
\item[1.] $r=p^{-{{\gamma(b)+m}\over2}}\in(0,\alpha)$
\item[2.] $r=p^{-\gamma(a)-m}\in(\alpha,\beta)$.
\end{itemize}
\end{thm}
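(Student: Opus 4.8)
The plan is to reduce the whole statement to the single identity $r/p^m=\rho(r)$, where $\rho$ is the function of Remark \ref{r2}, and then to solve this identity explicitly in each admissible range of $r$. First I would record the correct characterization of a minimal invariant ball inside $S_r(0)$. By Theorem \ref{ca}(2), for every $c\in S_r(0)$ the ball $V_{\rho(r)}(c)$ is invariant, while by Theorem \ref{ca}(3) every invariant ball $V_\gamma(c)\subset S_r(0)$ must satisfy $\gamma\geq\rho(r)$. Because we work in $\Q_p$, all ball radii lie in the discrete value group $p^{\Z}$; moreover, under the standing assumption that $\sqrt{a^2-4b}\in\Q_p$, Proposition \ref{1} gives $\alpha,\beta\in p^{\Z}$ and hence $\alpha\beta=|b|_p\in p^{\Z}$ and $\delta=|a|_p\in p^{\Z}$, so by Lemma \ref{ab2} the number $\rho(r)$ is itself an exact integer power of $p$. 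Combining these facts, a ball $V_\gamma(c)\subset S_r(0)$ is a minimal invariant ball precisely when $\gamma=\rho(r)$: such a ball is invariant, its largest proper sub-ball has radius $\gamma/p<\rho(r)$ and is therefore not invariant, whereas if $\gamma>\rho(r)$ then already $V_{\gamma/p}(c)$ is invariant and $V_\gamma(c)$ fails to be minimal.

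Next I would apply this to the ball $V_{r/p^m}(c_k)$. Since $m\geq 1$ forces $r/p^m<r=|c_k|_p$, the strong triangle inequality gives $V_{r/p^m}(c_k)\subset S_r(0)$, so by the previous paragraph $V_{r/p^m}(c_k)$ is a minimal invariant ball if and only if $r/p^m=\rho(r)$. It then remains to solve this equation in the two ranges supplied by Lemma \ref{ab2}. For $0<r<\alpha$ we have $\rho(r)=r^3/(\alpha\beta)$, so $r/p^m=r^3/(\alpha\beta)$ is equivalent to $r^2=\alpha\beta\,p^{-m}=|b|_p\,p^{-m}=p^{-\gamma(b)-m}$, that is $r=p^{-(\gamma(b)+m)/2}$, which is case (1). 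For $\alpha<r<\beta$ we have $\rho(r)=r^2/\delta$, so $r/p^m=r^2/\delta$ is equivalent to $r=\delta\,p^{-m}=|a|_p\,p^{-m}=p^{-\gamma(a)-m}$, which is case (2). Reading each of these equivalences in both directions yields the claimed ``if and only if'', with the side conditions $r\in(0,\alpha)$ and $r\in(\alpha,\beta)$ simply recording which branch of $\rho$ was used.

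The arithmetic here is entirely routine; the only point that genuinely needs the hypotheses is the claim that $\rho(r)$ is an exact power of $p$, since this is what upgrades the inequality $\gamma\geq\rho(r)$ of Theorem \ref{ca}(3) to the exact equality $\gamma=\rho(r)$ that characterizes minimality. This is precisely where I use that the ambient field is $\Q_p$, with its discrete value group, together with $\sqrt{a^2-4b}\in\Q_p$, which guarantees $\alpha,\beta,\delta\in p^{\Z}$. I therefore expect this verification to be the main (and essentially the only) obstacle; the remainder of the argument is bookkeeping with the explicit formula for $\rho$ and the identities $\delta=|a|_p$ and $\alpha\beta=|b|_p$.
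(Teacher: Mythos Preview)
Your proposal is correct and follows essentially the same route as the paper: both reduce to the equation $r/p^m=\rho(r)$ and then solve it in the two ranges using Lemma~\ref{ab2} together with $\alpha\beta=|b|_p=p^{-\gamma(b)}$ and $\delta=|a|_p=p^{-\gamma(a)}$. In fact you are more careful than the paper at one point: the paper simply asserts ``By Theorem~\ref{ca}, $V_{\rho(r)}(c)$ is the minimal invariant ball,'' whereas you explain why the inequality $\gamma\geq\rho(r)$ from Theorem~\ref{ca}(3) upgrades to the equality $\gamma=\rho(r)$ characterizing minimality, via the discreteness of the value group of $\Q_p$ and the hypothesis $\sqrt{a^2-4b}\in\Q_p$.
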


\begin{proof}
By Theorem \ref{ca}, $V_{\rho(r)}(c)$ is the minimal invariant ball. Hence, a ball $V_{y}(c)$ is a minimum invariant ball iff $y=\rho(r).$\\
By Proposition \ref{1} and the canonical forms of $a$ and $b$, we have that
$$\alpha\beta=p^{-{\gamma(b)}} \ {\rm and} \ \delta=p^{-\gamma(a)}.$$
\begin{itemize}

\item[1.] By Lemma \ref{ab2}, if $0<r<\alpha$ then $\rho(r)=|f(c)-c|_p=\dfrac{r^3}{\alpha\beta}$. Then,

$$\rho(r)=\dfrac{r}{p^m}\Longleftrightarrow\dfrac{r^3}{\alpha\beta}=\dfrac{r}{p^m}\Longleftrightarrow{r^2}=\dfrac{\alpha\beta}{p^m}=p^{-\gamma(b)-m}\Longleftrightarrow{r}=p^{-{{\gamma(b)+m}\over2}}.$$
\item[2.] By Lemma \ref{ab2}, if $\alpha<r<\beta$ then $\rho(r)={r^2\over\beta}.$ Also, since $r\in{A}$, it must be the case that $\delta=\beta.$ Now,
$$\rho(r)=\dfrac{r}{p^m}\Longleftrightarrow\dfrac{r^2}{\delta}=\dfrac{r}{p^m}\Longleftrightarrow{r}=\dfrac{\delta}{p^m}=p^{-\gamma(a)-m}.$$
\end{itemize}
\end{proof}

We now fix $r\in A$ and consider the dynamical system generated by the function $f: S_r(0)\to S_r(0)$. We are interested in $2-$periodic trajectories of the dynamical system. Such trajectories are defined by solutions of the following equation
$$f^2(x)=x.$$
Solving this equations gives the polynomial
$$P(x)=x^6+3ax^5+(3b+4a^2)x^4+(7ab+2a^3)x^3+(3b^2+6a^2b)x^2+6ab^2x+2b^3=0.$$

\begin{lemma}\label{l4} Fix $r\in A$ and assume that the parameter $b\in S_r(0)$.
The polynomial $P(x)=0$ has a solution $x=b$ iff the parameter $a\in\mathbb{Q}_p\setminus\{0\}$ satisfies the following equality
$$b^3+(3a+3)b^2+(4a^2+7a+3)b+(2a^3+6a^2+6a+2)=0.$$
\end{lemma}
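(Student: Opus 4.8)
Lemma \ref{l4} asks: given $b \in S_r(0)$, when does $x=b$ solve $P(x)=0$? The plan is simply to substitute $x=b$ into the degree-six polynomial $P(x)$ and simplify, then factor out the highest possible power of $b$ to arrive at the claimed cubic condition on $a$.

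Let me work out what happens. Setting $x=b$ in
$$P(x)=x^6+3ax^5+(3b+4a^2)x^4+(7ab+2a^3)x^3+(3b^2+6a^2b)x^2+6ab^2x+2b^3,$$
every term becomes a monomial in $a$ and $b$:
$$P(b)=b^6+3ab^5+(3b+4a^2)b^4+(7ab+2a^3)b^3+(3b^2+6a^2b)b^2+6ab^2\cdot b+2b^3.$$
Expanding term by term gives $b^6+3ab^5+3b^5+4a^2b^4+7ab^4+2a^3b^3+3b^4+6a^2b^3+6ab^3+2b^3$. First I would collect these by powers of $b$: grouping yields $b^3\big(b^3+(3a+3)b^2+(4a^2+7a+3)b+(2a^3+6a^2+6a+2)\big)$, which is exactly $b^3$ times the cubic appearing in the statement. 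Since $b\in S_r(0)$ forces $b\neq 0$ (as $r>0$), we may divide by $b^3$, and hence $P(b)=0$ is equivalent to the vanishing of the bracketed cubic. That cubic is a relation between $b$ and $a$, and reading it as a condition on $a$ (with $b$ given) establishes the ``if and only if'' immediately.

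The proof is therefore a direct computation with no genuine obstacle; the only care needed is bookkeeping in the expansion. I would double-check the $b^4$ coefficient $(3+4a^2+7a)$ and the constant-in-$b$ coefficient $(2a^3+6a^2+6a+2)$ against the statement, since those collect the most terms and are the likeliest spots for an arithmetic slip. It is worth remarking that the cubic factors as $2(a+1)^3 + (b-?)\cdots$—indeed $2a^3+6a^2+6a+2 = 2(a+1)^3$ and $4a^2+7a+3=(a+1)(4a+3)$ and $3a+3=3(a+1)$, so the condition can be written suggestively in terms of $(a+1)$; I would note this factorization as a sanity check even if the final statement is left in expanded form. The hypothesis $a\in\mathbb{Q}_p\setminus\{0\}$ plays no role in the algebraic identity itself—it merely records the standing assumption of the section that $a\neq 0$ in the rational function \eqref{fa}—so the equivalence is purely the polynomial identity $P(b)=b^3\cdot(\text{cubic})$ together with $b\neq 0$.
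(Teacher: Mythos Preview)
Your proof is correct and follows essentially the same route as the paper's: both substitute $x=b$ into $P(x)$, collect the result as $b^6+(3a+3)b^5+(4a^2+7a+3)b^4+(2a^3+6a^2+6a+2)b^3$, and divide through by $b^3$ (valid since $b\in S_r(0)$ with $r>0$). Your version is simply more explicit about the bookkeeping and about why $b\neq 0$; the added factorization remarks on $(a+1)$ are a nice sanity check not present in the original.
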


\begin{proof}
We see that
$$P(b)=b^6+3ab^5+(3b+4a^2)b^4+(7ab+2a^3)b^3+(3b^2+6a^2b)b^2+6ab^3+2b^3=0$$$$\Longleftrightarrow b^6+(3a+3)b^5+(4a^2+7a+3)b^4+(2a^3+6a^2+6a+2)b^3=0$$
$$\Longleftrightarrow b^3+(3a+3)b^2+(4a^2+7a+3)b+(2a^36a^2+6a+2)=0.$$
\end{proof}

\begin{rk}
A cubic equation may have not solutions in $\mathbb{Q}_p$. In papers \cite{MOS} and \cite{SA} some conditions of existence of such solutions are given. Therefore the condition of Lemma \ref{l4} can be checked by using the results of \cite{MOS} and \cite{SA}.
\end{rk}
By the Lemma we have

\begin{pro}
If $b\in S_r(0)$ for $r\in A$ then $|a|_p\neq1$.
\end{pro}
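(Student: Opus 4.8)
The plan is to reduce the statement to a single equality and then argue by contradiction. The hypothesis $b\in S_r(0)$ means exactly that $|b|_p=r$, and part 3 of Proposition \ref{1} records the parameter relation $|b|_p=\alpha\beta$. Hence the radius is pinned down as $r=\alpha\beta$, and the whole proposition reduces to showing that $r=\alpha\beta$ cannot lie in $A$ once $\delta=|a|_p=1$. So I would assume $\delta=1$ and derive a contradiction with $r=\alpha\beta\in A$.

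The next step is to split according to the two regimes that define $A$ and invoke the appropriate clause of Proposition \ref{1} in each. If $\delta\le\alpha=\beta$, so that the relevant set is $A_1=\{r:0<r<\alpha\}$, then $\delta=1$ forces $\alpha=\beta\ge 1$, whence $r=\alpha\beta=\alpha^2\ge\alpha$; this violates the defining strict inequality $r<\alpha$ of $A_1$. If instead $\alpha<\beta=\delta$, so that the relevant set is $A_2=\{r:r\in(0,\beta)\setminus\{\alpha\}\}$, then $\delta=1$ forces $\beta=1$, and therefore $r=\alpha\beta=\alpha$; but $A_2$ explicitly excludes the value $r=\alpha$. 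In both regimes the assumption $\delta=1$ places $r=\alpha\beta$ outside $A$, contradicting $r\in A$, so $|a|_p\ne 1$.

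I expect no real obstacle here: the argument is essentially bookkeeping, matching the forced value $r=\alpha\beta$ against the two case-definitions of $A$. The only points requiring care are selecting the correct clause of Proposition \ref{1} in each regime (namely $\delta=\max\{\alpha,\beta\}$ when $\alpha\ne\beta$ and $\delta\le\alpha$ when $\alpha=\beta$) and observing that it is precisely the exclusion of $r=\alpha$ in $A_2$ and the strict bound $r<\alpha$ in $A_1$ that generate the contradiction. The cubic relation of Lemma \ref{l4} is not needed for the inequality $|a|_p\ne 1$ itself; its role is to situate this proposition within the study of $2$-periodic points (and to guarantee $a\neq 0$ for such orbits). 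I would therefore present the result as a direct consequence of Proposition \ref{1} together with the definition of the set $A$.
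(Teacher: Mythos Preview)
Your proof is correct and follows essentially the same route as the paper: assume $|a|_p=1$, use $|b|_p=\alpha\beta$ from Proposition~\ref{1} to pin down $r=\alpha\beta$, and then check in each of the two parameter regimes that this value falls outside the defining interval of $A$. The paper's own proof in fact swaps the labels $A_1$ and $A_2$ relative to their definitions in Section~4, so your version is the more careful write-up of the same argument.
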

\begin{proof}
Since $b\in S_r(0)$, $|b|_p=r$. By Proposition 1, $r=\alpha\beta$ and $\delta=|a|_p$.\\
Suppose that $|a|_p=1$. Since $r\in A$, $r\in A_1$ or $r\in A_2$.\\
Case I: Suppose $r\in A_1$, then $\alpha<\beta=\delta.$ Since $\delta=|-a|_p=|a|_p=1$, we have that
$\beta=1$ and $r=\alpha\beta=\alpha$, hence $r\not\in A$, a contradiction.\\
Case II: Suppose $r\in A_2$, then $\delta\leq\alpha=\beta$. We now have that
$$1=|a|_p=\delta\leq\alpha=\beta\Rightarrow\alpha=\beta\leq\alpha\beta=r.$$
Thus, $r\not\in A$ a contradiction.\\
Therefore, $|a|_p\neq1$.
\end{proof}

We will now use techniques from \cite{ST} to parametrize rational solutions
to the cubic equation given in Lemma {\ref{l4}}. We let this cubic equation
 be given by $C$ as follows
$$C: b^3+(3a+3)b^2+(4a^2+7a+3)b+(2a^3+6a^2+6a+2)=0.$$
We are interested in finding rational points on this cubic curve.
Disregarding the condition that $b\neq 0$ for now gives that
$(b,a)=(0,-1)$ is a rational point on this cubic and in fact a
point of singularity (see \cite{ST}). Let
$$L: \ \ a=qb-1 $$
be the line of arbitrary rational slope $q$ that intersects the point.
The intersection of $L$ and $C$ will consist of the point
$(b,a)=(0,-1)$ and another point $(b_0,a_0)$ where $a_0,b_0\in\mathbb{Q}$.

Making the substitution $a=qb-1$ gives that
$$b^3+3b^3q+4b^3q^2+2b^3q^3-b^2q=0$$
which implies that
$$b=0 \ \ {\rm or} \ \ b=\dfrac{q}{1+3q+4q^2+2q^3}.$$
However, notice that if $b=0$ then $a=-1$. Denote
$$h(q)=\dfrac{q}{1+3q+4q^2+2q^3}.$$

Using the second representation of $b$, we now have parameters $b=h(q)$ and $a=qh(q)-1$ to rational points on the curve $C$.
Notice that $h(q)$ is undefined at $q=-1$.
Applying the conditions that $ab\neq0$ we see that $q\neq 0$ and $q\neq -\frac{1}{2}$.

We have now proved the following Theorem.
\begin{thm}\label{T13a} If
$$(b,a)\in \left\{\big(h(q), qh(q)-1\big): q\in\mathbb{Q}\setminus\{0,1,-\frac{1}{2}\}\right\}$$
%For any $q\in\mathbb{Q}\setminus\{0,1,-\frac{1}{2}\}$, if
%$$(b,a)=\big(h(q), qh(q)-1\big),$$
then $ab\neq0$ and $P(b)=0.$
\end{thm}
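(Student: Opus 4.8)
The plan is to reduce the claim $P(b)=0$ to the statement that the point $(b,a)$ lies on the cubic curve $C$, and then to read off both $P(b)=0$ and $ab\neq 0$ directly from the explicit parametrization. The key algebraic fact, already implicit in the proof of Lemma \ref{l4}, is the factorization $P(b)=b^{3}\bigl(b^{3}+(3a+3)b^{2}+(4a^{2}+7a+3)b+(2a^{3}+6a^{2}+6a+2)\bigr)=b^{3}\,C(b,a)$, where $C(b,a)$ denotes the left-hand side of the defining equation of $C$. Thus $P(b)=0$ holds as soon as $C(b,a)=0$, and I would first record this factorization so that the entire theorem becomes a statement about rational points on $C$.

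First I would verify that $(b,a)=(h(q),\,qh(q)-1)$ satisfies $C(b,a)=0$. Substituting the line $L:\,a=qb-1$ into $C$ and using $a+1=qb$ together with $2a^{3}+6a^{2}+6a+2=2(a+1)^{3}$ collapses the cubic to $b^{2}\bigl[b\,(1+3q+4q^{2}+2q^{3})-q\bigr]=0$, exactly as in the paragraph preceding the theorem. Writing $D(q)=1+3q+4q^{2}+2q^{3}$, the nonzero root is $b=q/D(q)=h(q)$, so by construction $b\,D(q)-q=0$ and hence $C(h(q),\,qh(q)-1)=0$. Combined with the factorization above this gives $P(b)=0$ for every admissible $q$.

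It then remains to pin down for which $q$ the pair is well defined and satisfies $ab\neq 0$. Here I would factor the two relevant cubics. First, $D(q)=(q+1)(2q^{2}+2q+1)$, whose quadratic factor has discriminant $-4<0$ and is therefore irreducible over $\mathbb{Q}$, so $D(q)$ vanishes only at $q=-1$; this is the single value at which $h$ fails to be defined. Next, $b=h(q)=0$ forces $q=0$, so $b\neq 0\iff q\neq 0$. Finally, writing $a=qh(q)-1=\bigl(q^{2}-D(q)\bigr)/D(q)=-(2q+1)(q^{2}+q+1)/D(q)$ and noting that $q^{2}+q+1$ has discriminant $-3<0$ and hence no rational root, we obtain $a=0\iff q=-\tfrac{1}{2}$. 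Therefore $ab\neq 0$ holds precisely for $q\notin\{0,-\tfrac{1}{2}\}$ with $q\neq -1$, which is the content of the asserted parametrization.

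The routine parts (the collapse of $C$ under $a=qb-1$ and the two polynomial factorizations) are mechanical. The hard part, such as it is, will be the irreducibility of the quadratic factors $2q^{2}+2q+1$ and $q^{2}+q+1$ over $\mathbb{Q}$: this discriminant check is exactly what guarantees that the only slopes $q$ to be discarded are the stated ones and that no other rational points on $C$ are silently lost. I expect this verification, rather than any of the surrounding algebra, to be the genuine point requiring care.
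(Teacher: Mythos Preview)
Your argument is correct and follows the same route as the paper: substitute the pencil of lines $a=qb-1$ through the singular point $(0,-1)$ into the cubic $C$, solve for $b$, and then sort out which values of $q$ must be excluded. Your treatment is in fact more careful than the paper's, which simply asserts that $h$ is undefined at $q=-1$ and that $ab\neq 0$ forces $q\neq 0,\,-\tfrac12$, without your discriminant checks on $2q^{2}+2q+1$ and $q^{2}+q+1$ ensuring these are the \emph{only} rational exclusions. Note also that your analysis (and the paper's own discussion preceding the theorem) shows the excluded set should be $\{0,-1,-\tfrac12\}$; the ``$1$'' in the printed statement appears to be a typo for ``$-1$''.
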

\begin{rk}
Although we now have many rational solutions to $P(b)=0$, we must choose the parameter $q$ such that $|b|_p=r\in A$ and $\sqrt{a^2-4b}\in\mathbb{Q}_p$ to fully satisfy the conditions of Lemma {\ref{l4}}.
\end{rk}

\begin{pro}{\label{p6}} If the conditions of Lemma \ref{l4} are satisfied then the function $f$ has a 2-periodic orbit $\{b, f(b)\}$.
\end{pro}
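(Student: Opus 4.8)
The plan is to obtain the conclusion almost immediately from the way $P$ was constructed, together with the uniqueness of the fixed point. Recall that the degree-$6$ polynomial $P$ arose by clearing denominators in the equation $f^2(x)=x$: cross-multiplying $f^2(x)=x$ produces a degree-$9$ polynomial $\Phi(x)$, and a short computation of its low-order coefficients shows that $\Phi(x)=x^3P(x)$, the factor $x^3$ accounting for the triple fixed point $x_0=0$ (consistently, $P(0)=2b^3\neq0$, so $0$ is not a root of $P$). Hence the six roots of $P$ are exactly the genuine period-$2$ points of $f$. By Lemma~\ref{l4}, the stated hypotheses are equivalent to $P(b)=0$, so $b$ is a root of $P$ and therefore $f^2(b)=b$; that is, $b$ is periodic of period $1$ or $2$.

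First I would rule out period $1$. Since the function $f$ in (\ref{fa}) has the unique fixed point $x_0=0$, and since the standing assumption $ab\neq0$ forces $b\neq0$, we conclude $f(b)\neq b$. Thus the exact period of $b$ is $2$ and $\{b,f(b)\}$ is a genuine $2$-cycle. I would also check that the orbit is well defined, i.e.\ that $b$ avoids the pole $\hat x=-\frac{b}{a}$ of $f$: if $b=\hat x$ then $a=-1$, and substituting $a=-1$ into the cubic of Lemma~\ref{l4} collapses it to $b^3=0$, contradicting $b\neq0$. Therefore $b\neq\hat x$, $f(b)$ is defined, and the identity $f^2(b)=b$ is legitimate.

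I expect the only real subtlety to be justifying the passage ``$b$ is a root of $P$'' $\Rightarrow$ ``$b$ lies on a genuine $2$-cycle,'' since a priori a solution of $P(x)=0$ could coincide with a fixed point of $f$ or be a spurious root introduced when clearing denominators. The first possibility is excluded because the fixed point is unique and equal to $0$ while $b\neq0$ (equivalently $P(0)=2b^3\neq0$); the second is controlled by the degree count, as the six roots of the degree-$6$ polynomial $P$ match the six genuine period-$2$ points of a degree-$3$ map, leaving no room for extraneous roots. Assembling these observations shows that under the conditions of Lemma~\ref{l4} the pair $\{b,f(b)\}$ is a $2$-periodic orbit of $f$, which is precisely the assertion of the proposition.
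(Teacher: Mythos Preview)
Your argument is correct, and in fact supplies considerably more detail than the paper does: the paper states Proposition~\ref{p6} without proof, taking it as immediate from the way $P$ was introduced (``Such trajectories are defined by solutions of the following equation $f^2(x)=x$. Solving this equation gives the polynomial $P(x)=\dots$''). So there is no alternative proof to compare against; your write-up simply makes explicit what the paper leaves implicit.

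Two small comments. First, your verification that $b\neq\hat x$ ensures that $f(b)$ is defined, but for $f^2(b)$ to make sense you also need $f(b)\neq\hat x$. This is in fact covered by your degree argument, but it may be worth saying so directly: if $y=f(b)=\hat x$ then, in the cleared equation $y^3+ay^2+by=b(ay+b)$, the right side vanishes and the left side equals $\hat x^3\neq 0$, a contradiction. Second, your ``degree count'' remark (six roots of $P$ versus six genuine period-$2$ points of a degree-$3$ map) is a useful heuristic, but the concrete exclusion of spurious roots is really carried by the checks that $\hat x$ is not a root of $\Phi$ (indeed $\Phi(\hat x)=\hat x^9$) and that no root of $\Phi$ maps to $\hat x$; you have the ingredients for this, so you could phrase that step more directly rather than appealing to a global count.
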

We now use Theorem {\ref{T13a}} and Lemma {\ref{l4}} to construct an example of a 2-periodic orbit for $f$.
\begin{ex} Let $p=2$ and $q=1$. By Theorem {\ref{T13a}} this gives that
$b=\frac{1}{10}$ and $a=-\frac{9}{10}$ satisfy $P(b)=0$. Now,
$$|a|_2=|b|_2=2.$$
Notice that $\sqrt{a^2-4b}=\frac{\sqrt{41}}{10}$ exists in $\mathbb{Q}_2$ (see \cite{V}).

Since $b\in S_r(0)$ then $r=2$. Moreover  we have $\alpha=\beta=8$, $\delta=2$ and $r=2<\alpha=8$, therefore $r\in A$.
These conditions fulfill the requirements of Lemma {\ref{l4}}, so by Proposition {\ref{p6}}, $f$ has a 2-periodic orbit $\{\frac{1}{10},f(\frac{1}{10})\}.$
\end{ex}
\section*{ Acknowledgements}
The first author was supported by the National Science Foundation, grant number NSF HRD 1302873.

\end{document}